\newcommand{\rom}[1]{\uppercase\expandafter{\romannumeral#1}}
\newcommand{\beq}{\begin{equation}}
\newcommand{\eeq}{\end{equation}}
\newcommand{\bal}{\begin{align}}
\newcommand{\eal}{\end{align}}
\newcommand{\baln}{\begin{align*}}
\newcommand{\ealn}{\end{align*}}
\theoremstyle{plain}\newtheorem{theorem}{Theorem}[section]
\theoremstyle{definition}\theoremstyle{plain}\newtheorem{proposition}[theorem]{Proposition}\newtheorem{corollary}{Corollary}
\newcommand{\order}{\mathcal{O}}
\title{Asymptotic-preserving particle-in-cell method for the magnetized Vlasov--Poisson--Fokker--Planck equation}
\author[1,2]{Anjiao Gu}
\author[3,*]{Xiaojiang Zhang}
\affil[1]{\small School of Mathematical Sciences, Shanghai Jiao Tong University, Shanghai 200240, China}
\affil[2]{\small Institute of Natural Sciences, Shanghai Jiao Tong University, Shanghai 200240, China}
\affil[3]{\small School of Mathematics and Statistics,  Central South University, Changsha 410083, China}
\affil[*]{\small Corresponding author: $\text{zhang\_xiaojiang}$@csu.edu.cn}
\date{}  
\begin{document}

\maketitle

\abstract{
In this work, we develop and rigorously analyze a new class of particle methods for the magnetized Vlasov--Poisson--Fokker--Planck system. 
The proposed approach addresses two fundamental challenges: (1) the curse of dimensionality, which we mitigate through particle methods while preserving the system's asymptotic properties, and (2) the temporal step size limitation imposed by the small Larmor radius in strong magnetic fields, which we overcome through semi-implicit discretization schemes.
We establish the theoretical foundations of our method, proving its asymptotic-preserving characteristics and uniform convergence through rigorous mathematical analysis. 
These theoretical results are complemented by extensive numerical experiments that validate the method's effectiveness in long-term simulations. Our findings demonstrate that the proposed numerical framework accurately captures key physical phenomena, particularly the magnetic confinement effects on plasma behavior, while maintaining computational efficiency.
}

{\bf{Keywords:}} Magnetized Vlasov--Poisson--Fokker--Planck equation, Particle-in-cell method, Asymptotic-preserving algorithm

\section{Introduction}
The present study investigates plasma dynamics in strongly magnetized systems, with particular emphasis on the combined effects of magnetic confinement and Coulomb collisions. 
At the kinetic level, these systems are fundamentally described by the Vlasov equation, which determines the temporal and spatial evolution of the particle distribution function $f(t,x,v)$. 
While fluid models provide computationally tractable solutions for equilibrium conditions, they often prove inadequate for capturing non-equilibrium phenomena, making kinetic modeling essential for accurate physical representation.

Numerical approaches for solving Vlasov-type equations primarily fall into two categories: Eulerian methods and particle methods.
The Eulerian approach, also referred to as the grid-based method, involves discretizing the partial differential equations on a fixed computational grid and performing time integration of the distribution function at mesh points. This class of methods has seen extensive development, with various implementations successfully applied to plasma physics problems. 
These include finite difference methods \cite{qiu2017high}, finite volume methods \cite{Filbet2001Convergence,vogman2018conservative}, finite element methods \cite{na2018relativistic,heath2012discontinuous,cheng2013study}, and spectral methods \cite{blaustein2024structure,cai2013solving,di2019filtered}, among others.
In contrast to grid-based methods, the Particle-in-Cell (PIC) technique adopts a fundamentally different approach. It utilizes 'super particles' to approximate the distribution function through weighted Klimontovich representation and tracks their trajectories through phase space. This method proves particularly advantageous for kinetic-level plasma simulations, where the high dimensionality of phase space presents significant computational challenges. The PIC method demonstrates relatively low computational costs for high-dimensional simulations \cite{Nicolas2009,LYZ2019}, making it an attractive choice for such problems.
While the use of super particles introduces computational noise, empirical studies have established that this noise decreases proportionally to $1/\sqrt{N_p}$, where $N_p$ represents the number of super particles \cite{tskhakaya2007particle}. 
Despite this limitation, PIC simulations have proven effective in reproducing realistic physical phenomena \cite{Hockney1988} and are widely recognized as an efficient framework for kinetic plasma simulations \cite{Cottet2000}.

Simulating kinetic-level plasma phenomena presents another significant challenge: the inherent multi-scale nature of these systems. 
Characteristic scales such as the Debye length, Larmor radius, and mean free path often differ by orders of magnitude from the macroscopic dimensions of the device. 
Furthermore, the relaxation time introduces an additional temporal scale, further complicating the system dynamics. 
From a computational perspective, traditional explicit numerical schemes face substantial limitations, as they require resolution of these microscopic phenomena to maintain stability, resulting in prohibitive computational costs for large-scale or long-term simulations.
Two primary approaches have emerged to address these multi-physics challenges. 
The first approach employs domain decomposition, where distinct physical regimes are modeled separately in different regions of the simulated device (see \cite{klar1998asymptotic,bourgat1994coupling,arnold2001low}). 
While conceptually straightforward, this method requires sophisticated interface conditions to connect different models across regions and temporal stages. Researchers typically solve kinetic and fluid models simultaneously on separate subdomains, but this approach becomes particularly challenging in high-dimensional cases where interfaces may form complex curves or surfaces.
The second approach, known as asymptotic preserving (AP) schemes \cite{larsen1987asymptotic,golse1999convergence,jin1993fully,zhu2017vlasov,crouseilles2013asymptotic,degond2010asymptotic,blaustein2024structure}, offers a more unified framework. AP methods focus on developing numerical schemes that preserve the asymptotic transitions from microscopic to macroscopic models within a discrete framework. 
Unlike domain decomposition approaches, AP methods operate on a single set of microscopic equations, eliminating the need for explicit model coupling. 
These schemes automatically transition between microscopic and macroscopic solvers, providing a more seamless approach to multi-scale problems. 
For a comprehensive review of AP methodology and applications, we recommend \cite{Jin2022Asymptotic}.
This paper focuses specifically on the magnetized Vlasov--Poisson--Fokker--Planck (MVPFP) system. 
Unlike the standard Vlasov--Poisson--Fokker--Planck equation, which primarily concerns diffusion and high-field limits \cite{jin2011asymptotic,carrillo2021variational,Shi2022Contact,blaustein2024structure,jin2024asymptotic}, the MVPFP system introduces additional complexity through the Larmor radius, which becomes the dominant constraint on time step size in the presence of strong external magnetic fields.
This system has progressively attracted more attention in recent studies \cite{bostan2025asymptotic,bostan2023long}.
However, investigations into numerical algorithms for this equation are still lacking.
Therefore, the development of accurate and efficient algorithms for solving the MVPFP equations is of critical importance.

The structure of this paper is organized as follows. 
Section 2 presents the fundamental formulation and key properties of the MVPFP equation, followed by an analysis of its long-time asymptotic behavior under strong external magnetic fields. 
Section 3 introduces the stochastic particle-in-cell methodology, demonstrating its capability to preserve asymptotic properties. 
In Section 4, we develop time discretization schemes and rigorously establish their asymptotic-preserving characteristics, along with providing uniform estimates for the proposed methods. 
Numerical validations and practical applications are presented in Section 5 through comprehensive computational experiments. 
Finally, we conclude with a summary of our findings and contributions.

\section{The magnetized Vlasov--Poisson--Fokker--Planck equation}

\subsection{Description}
The magnetized Vlasov--Poisson--Fokker--Planck equation can be expressed as 
\begin{equation}\label{eqn:mvpfp}
\left\{
\begin{aligned}
&\partial_t f+v\cdot\nabla_x f+\left(E+v\times B\right)\cdot\nabla_v f=C(f),\\
&E=-\nabla_x \phi,\quad -\Delta_x{\phi}=\rho-\rho_0,\quad f(0,x,v)=f_0(x,v)
\end{aligned}
\right.
\end{equation}
where $f(t,x,v)$ is the distribution function of position $x\in\Omega_{x}\subset\mathbb{R}^{3}$ and velocity $v\in\Omega_{v}\subset\mathbb{R}^{3}$ at time $t$ while the Fokker-Planck type collision is
$$C(f)=\frac{1}{\tau}\nabla_v\cdot(\sigma\nabla_v f+vf),$$
where $\tau$ is the relaxation time and $\sigma$ is the velocity diffusion. Here, we assume that the ions form a uniform neutralizing background and $B$ is an external magnetic field.

For later use, we denote the $0$-th, $1$-st and $2$-nd moments of distribution function by 
$$\rho(t,x)=\int_{\Omega_{v}} f(t,x,v)dv,\ J(t,x)=\int_{\Omega_{v}}v f(t,x,v)dv$$
and $$S(t,x)=\int_{\Omega_{v}}v\otimes v f(t,x,v)dv=\int_{\Omega_{v}}vv^T f(t,x,v)dv.$$
In practical computation, the boundary conditions can be taken as zero, i.e., $f(t,x,v)=0$ and $\phi(x)=0$ on $\partial\Omega_x$ while the periodic boundary condition is also used frequently.
And we use $\Omega$ to denote the space $\Omega_x\times \Omega_v$ in the rest of this paper.

\begin{proposition}
MVPFP equation \eqref{eqn:mvpfp} has the following conservation laws:

Continuity equation:
\begin{equation}\label{ContinuityEq}
\partial_t \rho+\nabla_x\cdot J=0.
\end{equation}

Moment equation:
\begin{equation}\label{MomentEq}
\partial_t J+\nabla_x\cdot S-\rho E-\hat{B} J=-\frac{J}{\tau},
\end{equation}
where $\hat{B}=\left(
            \begin{array}{ccc}
              0 & b_3 & -b_2 \\
              -b_3 & 0 & b_1 \\
              b_2 & -b_1 & 0 \\
            \end{array}
          \right)$ with $B=(b_1,b_2,b_3)^T$.
\end{proposition}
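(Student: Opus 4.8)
\emph{The plan is} to derive both identities by taking velocity moments of the kinetic equation \eqref{eqn:mvpfp} and integrating by parts in $v$, discarding boundary contributions under the stated conditions (either $f=0$ on $\partial\Omega_x$ with sufficient decay in $v$, or periodicity). The zeroth moment yields \eqref{ContinuityEq} and the first moment yields \eqref{MomentEq}.

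For the continuity equation \eqref{ContinuityEq}, I would integrate \eqref{eqn:mvpfp} over $\Omega_v$. The time-derivative and transport terms produce $\partial_t\rho$ and $\nabla_x\cdot J$ directly, after exchanging the $x$-derivative with the velocity integral. The force term vanishes: since $E$ is independent of $v$ and the magnetic field satisfies $\nabla_v\cdot(v\times B)=0$ (each component $(v\times B)_i$ is independent of $v_i$), the expression $(E+v\times B)\cdot\nabla_v f$ is the pure velocity divergence $\nabla_v\cdot\big((E+v\times B)f\big)$ and integrates to zero. Likewise the collision term $C(f)=\frac1\tau\nabla_v\cdot(\sigma\nabla_v f+vf)$ is a velocity divergence and integrates to zero. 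This gives \eqref{ContinuityEq}.

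For the moment equation \eqref{MomentEq}, I would multiply \eqref{eqn:mvpfp} by $v$ and integrate over $\Omega_v$, treating each term separately. The first two terms give $\partial_t J$ and $\nabla_x\cdot S$. For the electric force, integration by parts in $v$ yields $\int_{\Omega_v} v\,(E\cdot\nabla_v f)\,dv=-\rho E$, using $\partial_{v_j}v_i=\delta_{ij}$ and $\int_{\Omega_v} f\,dv=\rho$. The same integration by parts applied to the magnetic force gives $-\int_{\Omega_v}(v\times B)f\,dv=-J\times B$, and the one remaining computational point is to verify the matrix identity $J\times B=\hat B J$ by comparing components against the definition of $\hat B$. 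For the collision operator, integrating $v\,C(f)$ by parts in $v$ leaves $-\frac1\tau\int_{\Omega_v}(\sigma\nabla_v f+vf)\,dv=-\frac1\tau J$, since $\int_{\Omega_v}\sigma\nabla_v f\,dv$ is a boundary term and $\int_{\Omega_v} v f\,dv=J$. Collecting these contributions, with the electric and magnetic terms appearing with their natural minus signs, produces \eqref{MomentEq}.

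The computation is essentially routine, so the difficulty is bookkeeping rather than analysis. The main points to get right are the sign and index matching in the magnetic term — confirming that the antisymmetric matrix $\hat B$ reproduces exactly $J\times B$ — and ensuring that all boundary contributions genuinely vanish under the imposed boundary and decay conditions. Both are straightforward but deserve explicit verification.
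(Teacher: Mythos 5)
Your proof is correct and follows essentially the same route as the paper: take the zeroth and first velocity moments of \eqref{eqn:mvpfp}, integrate by parts in $v$, and discard boundary terms, with the antisymmetric-matrix identity $\hat{B}J = J\times B$ handling the magnetic contribution. The only cosmetic difference is that the paper expands the collision operator as $\sigma\Delta_v f + 3f + v\cdot\nabla_v f$ and evaluates each integral separately (obtaining $3\rho-3\rho=0$ and $3J-4J=-J$), whereas you keep $C(f)$ in divergence form and integrate by parts once---the same computation with slightly tidier bookkeeping.
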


\begin{proof}
By integrating the Vlasov equation in $v$ yield, one can get $LHS=\partial_t \rho+\nabla_x\cdot J$ and
\begin{align*}
RHS&=\int_{\Omega_v}C(f)dv\\
&=\frac{1}{\tau}\int_{\Omega_v}\nabla_v\cdot(\sigma\nabla_v f+vf)dv\\
&=\frac{1}{\tau}\left(\sigma\int_{\Omega_v}\Delta_v fdv+3\int_{\Omega_v}fdv+\int_{\Omega_v}v\cdot \nabla_v fdv\right)\\
&=\frac{1}{\tau}(3\rho-3\rho)=0.
\end{align*}

On the other hand, by multiplying the Vlasov equation by $v$ and integrating in $v$ leads to
$$LHS=\partial_t J+\nabla_x\cdot S-\rho E-\hat{B} J$$
and
\begin{align*}
RHS&=\int_{\Omega_v}vC(f)dv\\
&=\frac{1}{\tau}\left(\sigma\int_{\Omega_v}v\Delta_v fdv+3\int_{\Omega_v}vfdv+\int_{\Omega_v}v(v\cdot \nabla_v f)dv\right)\\
&=\frac{1}{\tau}(3J-4J)=-\frac{J}{\tau}.
\end{align*}
\end{proof}

The conservative properties of the MVPFP equation \eqref{eqn:mvpfp} can be described as the following proposition.
\begin{proposition}
If the potential function $\phi$ has a zero boundary or a periodic boundary, $f$ is periodic in $x$ and is
compactly supported in $v$, the following conservative properties hold.

(a) {\bf{Charge}}: $\mathcal{Q}=\int_{\Omega} fdxdv=\int_{\Omega_x} \rho dx$.
$$\frac{d\mathcal{Q}}{dt}=0.$$

(b) {\bf{Energy}}: $\mathcal{H}=\frac{1}{2}\int_{\Omega} \vert v\vert^2 fdxdv+\frac{1}{2}\int_{\Omega_x} \vert E\vert^2 dx$.
$$\frac{d\mathcal{H}}{dt}=\frac{1}{\tau}\left(3\sigma\mathcal{Q}-\int_{\Omega} \vert v\vert^2 fdxdv\right).$$

(c) {\bf{Entropy}}: $\mathcal{S}=\int_{\Omega} f\ln{f} dxdv$.
$$\frac{d\mathcal{S}}{dt}=\frac{1}{\tau}\left(3\mathcal{Q}-\sigma\int_{\Omega} \frac{\vert \nabla_v f\vert^2}{f} dxdv\right).$$
\end{proposition}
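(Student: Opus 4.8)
The plan is to handle the three identities independently: in each case I would differentiate the functional in time, substitute the evolution equation \eqref{eqn:mvpfp} for $\partial_t f$, and integrate by parts in $x$ and $v$ so that the streaming and magnetic terms drop out by the periodicity/support hypotheses, leaving only the electric and collisional contributions. Part (a) is immediate: integrating the continuity equation \eqref{ContinuityEq} over $\Omega_x$ kills the flux $\int_{\Omega_x}\nabla_x\cdot J\,dx$ (periodic or zero boundary), so $d\mathcal{Q}/dt=0$.

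For the energy (b), I would split $d\mathcal{H}/dt$ into a kinetic part $\tfrac12\int_\Omega|v|^2\partial_t f$ and a field part $\int_{\Omega_x}E\cdot\partial_t E$. In the kinetic part, the $x$-transport term integrates to zero, the magnetic term vanishes pointwise since $v\cdot(v\times B)=0$, and the electric term produces $\int_{\Omega_x}E\cdot J\,dx$ after integrating by parts in $v$ and using $\nabla_v|v|^2=2v$. Integrating the collision term by parts twice (once to move $\nabla_v\cdot$ onto $|v|^2$, once more on the diffusion piece, using $\int_\Omega v\cdot\nabla_v f=-3\mathcal{Q}$) yields exactly $\tfrac1\tau\bigl(3\sigma\mathcal{Q}-\int_\Omega|v|^2 f\bigr)$.

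The crux is the field part, and this is where I expect the only real difficulty. Using $E=-\nabla_x\phi$, $-\Delta_x\phi=\rho-\rho_0$, and the continuity equation, I would show $\frac{d}{dt}\tfrac12\int_{\Omega_x}|E|^2\,dx=\int_{\Omega_x}\phi\,\partial_t\rho\,dx=-\int_{\Omega_x}E\cdot J\,dx$, which cancels precisely the electric contribution coming from the kinetic part. This Poynting-type cancellation is the main obstacle: it is the only step that genuinely uses the Poisson coupling, and it requires chaining together $\partial_t\Delta_x\phi=-\partial_t\rho=\nabla_x\cdot J$ with two integrations by parts, taking care that the boundary terms vanish under the stated conditions.

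For the entropy (c), I would differentiate to obtain $\int_\Omega(\ln f+1)\partial_t f$. The key observation is $(\ln f+1)\nabla f=\nabla(f\ln f)$, which turns both the $x$-transport term and the force term into exact divergences; the force term additionally uses $\nabla_v\cdot(v\times B)=0$ and $\nabla_v\cdot E=0$, so every streaming and Lorentz contribution integrates to zero by periodicity in $x$ and compact support in $v$. Integrating the collision term by parts once, writing $\nabla_v(\ln f)=\nabla_v f/f$ and again invoking $\int_\Omega v\cdot\nabla_v f=-3\mathcal{Q}$, then delivers $\tfrac1\tau\bigl(3\mathcal{Q}-\sigma\int_\Omega|\nabla_v f|^2/f\bigr)$. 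Here the only subtlety is justifying the velocity boundary terms, which is automatic from compact support in $v$.
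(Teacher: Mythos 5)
Your proposal is correct and follows essentially the same route as the paper's proof: part (a) integrates the continuity equation, part (b) hinges on the same Poisson--continuity identity turning $\int_{\Omega_x}E\cdot J\,dx$ into the time derivative of the field energy (you phrase it as a cancellation between kinetic and field parts, the paper moves the term to the left-hand side, which is the same computation), and part (c) uses the same integration by parts with $(\ln f+1)\nabla f=\nabla(f\ln f)$ and compact support in $v$. The only cosmetic difference is that you integrate the collision operator by parts in its divergence form, while the paper first expands $C(f)=\frac{1}{\tau}(\sigma\Delta_v f+3f+v\cdot\nabla_v f)$; both give the identical result.
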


\begin{proof}
$\\ $
\par{(a): By integrating the continuity equation \eqref{ContinuityEq} in $x$ yield, one can directly get the result.}

(b): By multiplying the Vlasov equation by $\vert v\vert^2$ and integrating in $x$ and $v$ gives
$$\frac{d}{dt} \int_{\Omega}\vert v\vert^2 fdxdv+\int_{\Omega} \nabla_x\cdot(\vert v\vert^2 vf) dxdv+\int_{\Omega} \vert v\vert^2\nabla_v\cdot((E+v\times B)f) dxdv=\int_{\Omega} \vert v\vert^2 C(f) dxdv.$$
The second term on the left hand side vanishes due to $f$ is periodic in $x$.
By using Green formula, it yields
$$\int_{\Omega} \vert v\vert^2(\nabla_v\cdot Ef)dxdv=-2\int_{\Omega} (v\cdot E)fdxdv=-2\int_{\Omega_x}E\cdot Jdx.$$
Then by using the continuity equation \eqref{ContinuityEq} and Poisson equation, we can obtain
$$\int_{\Omega_x}\nabla_x\phi\cdot Jdx=-\int_{\Omega_x}\phi\nabla_x\cdot Jdx=\int_{\Omega_x}\phi\frac{\partial\rho}{\partial t}dx=\frac{1}{2}\frac{d}{dt}\int_{\Omega_x}\vert E\vert^2 dx.$$
Moreover, one can have $\int_{\Omega} \vert v\vert^2\nabla_v\cdot (v\times B)f dxdv=0$ due to $\nabla_v\cdot (v\times B)=0$.
Finally, we have 
$$LHS=\frac{d}{dt} \left(\int_{\Omega}\vert v\vert^2 fdxdv+\int_{\Omega_x}\vert E\vert^2 dx\right).$$
Also, it can be obtained that
\begin{align*}
RHS&=\frac{1}{\tau}\left(\sigma\int_{\Omega} \vert v\vert^2 \Delta_v fdxdv+3\int_{\Omega} \vert v\vert^2 fdxdv+\int_{\Omega} \vert v\vert^2 v\cdot \nabla_v fdxdv\right)\\
&=\frac{1}{\tau}\left(6\sigma\mathcal{Q}+3\int_{\Omega} \vert v\vert^2 fdxdv-5\int_{\Omega} \vert v\vert^2 fdxdv\right).
\end{align*}

(c): By multiplying the Vlasov equation by $\ln{f}+1$ and integrating in $x$ and $v$ results
$$LHS=\frac{d}{dt} \int_{\Omega}f\ln{f}dxdv+\int_{\Omega} (v\cdot\nabla_x f)\ln{f} dxdv+\int_{\Omega} ((E+v\times B)\cdot\nabla_v f)\ln{f} dxdv.$$
Similarly, the second and third integrals vanish since $f$ is periodic in $x$ and
compactly supported in $v$.
On the other hand, we have
\begin{align*}
RHS&=\frac{1}{\tau}\int_{\Omega}(\ln{f}+1)(\sigma\Delta_v f+3f+v\cdot\nabla_v f)dxdv\\
&=\frac{1}{\tau}\int_{\Omega}((\ln{f}+1)\sigma\Delta_v f+3f)dxdv\\
&=\frac{1}{\tau}\int_{\Omega}(\sigma\ln{f}\Delta_v f+3f)dxdv\\
&=\frac{1}{\tau}\int_{\Omega}\left(-\frac{\sigma}{f}\vert\nabla_v f\vert^2+3f\right)dxdv.
\end{align*}

\end{proof}

\subsection{Asymptotic behavior of the MVPFP equation with a strong external magnetic field}

In this paper, we focus on simulating the asymptotic behavior of the two-dimensional MVPFP equation.
And in order to study such equation, one should take a small parameter related to the magnetic field and the time scale which leads to the following system
\begin{equation}\label{eqn:scale}
\left\{
		\begin{aligned}
&\varepsilon\partial_t f+v\cdot\nabla_x f+\left(E+\frac{b(x)K}{\varepsilon}v\right)\cdot\nabla_v f=C(f)\,,\\\,\\
&-\Delta_x{\phi}=\rho-\rho_0,
\end{aligned}
		\right.
	\end{equation}
where $K=\left(
            \begin{array}{cc}
              0 & 1 \\
              -1 & 0
            \end{array}
          \right).$
In this case, $x\in\Omega_{x}\subset\mathbb{R}^{2},v\in\Omega_{v}\subset\mathbb{R}^{2}$ and it has a strong magnetic field, which is $B(x)=\frac{1}{\varepsilon}[0,0,b(x)]^T$.
For this equation, we can give its limit system by the following theorem.

\begin{theorem}
Assume that $E\in W^{1,\infty}$ and $b\in C^1(\Omega_x)$.
When $\varepsilon\to 0$, the MVPFP system \eqref{eqn:mvpfp} has a limit system:
\begin{equation}\label{eqn:limit}
    \partial_t\rho+\nabla_x\cdot\left[\rho\left(\frac{K E}{b(x)}-\sigma\frac{K\nabla b(x)}{b(x)^2}\right)\right]=0.
\end{equation}
\end{theorem}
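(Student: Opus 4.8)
The plan is to carry out a Hilbert / Chapman--Enskog expansion of the scaled system \eqref{eqn:scale} in powers of $\varepsilon$ and to read off the limiting density equation from the solvability conditions at the first two orders. It is convenient to write \eqref{eqn:scale} as
$$\varepsilon\partial_t f+v\cdot\nabla_x f+E\cdot\nabla_v f+\frac{1}{\varepsilon}\mathcal{L}f=C(f),\qquad \mathcal{L}f:=b(x)\,(Kv)\cdot\nabla_v f,$$
and to insert the ansatz $f=f^{(0)}+\varepsilon f^{(1)}+\varepsilon^2 f^{(2)}+\cdots$. The two structural facts I would use repeatedly are $\nabla_v\cdot(Kv)=0$ and $(Kv)\cdot v=0$, so that $\mathcal{L}$ generates rigid rotations in $v$ and is skew-adjoint on $L^2_v$; its kernel is exactly the radially symmetric functions of $v$, and $\mathcal{L}g=h$ is solvable iff the angular average of $h$ over $v$ vanishes.

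First I would match orders. At order $\varepsilon^{-1}$ one gets $\mathcal{L}f^{(0)}=0$, so $f^{(0)}=f^{(0)}(t,x,|v|)$ is radial in $v$. At order $\varepsilon^{0}$,
$$\mathcal{L}f^{(1)}=C(f^{(0)})-v\cdot\nabla_x f^{(0)}-E\cdot\nabla_v f^{(0)}.$$
Since $f^{(0)}$ is radial, the transport and field terms are odd in $v$ and have zero angular average, whereas $C(f^{(0)})$ is radial; the solvability condition therefore forces $C(f^{(0)})=0$. Together with radial symmetry this pins down the local Maxwellian $f^{(0)}=\rho(t,x)M(v)$ with $M(v)=\frac{1}{2\pi\sigma}e^{-|v|^2/(2\sigma)}$, the double use of $\ker\mathcal{L}$ and then $\ker C$ being the first place that needs care.

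Next I would extract the current. With $f^{(0)}=\rho M$ the right-hand side above equals $M\,(v\cdot a)$, where $a:=-\nabla_x\rho+\frac{\rho}{\sigma}E$, and one checks directly that $f^{(1)}=\frac{M}{b}\,v\cdot(Ka)$ solves $\mathcal{L}f^{(1)}=M\,(v\cdot a)$ up to an irrelevant radial term (equivalently, this is the leading balance $b\,KJ=\varepsilon(\nabla_x\cdot S-\rho E)+\mathcal{O}(\varepsilon^2)$ of the first-moment equation, closed by $S^{(0)}=\sigma\rho\,I$). Because $f^{(0)}$ is even, $J^{(0)}=0$, so $J=\varepsilon J^{(1)}+\mathcal{O}(\varepsilon^2)$ with
$$J^{(1)}=\int_{\Omega_v}v\,f^{(1)}\,dv=\frac{\sigma}{b}\,Ka=\frac{\rho\,KE}{b}-\frac{\sigma}{b}K\nabla_x\rho,$$
using $\int_{\Omega_v}vv^{T}M\,dv=\sigma I$. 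Inserting this into the zeroth moment $\varepsilon\partial_t\rho+\nabla_x\cdot J=0$ and dividing by $\varepsilon$ gives $\partial_t\rho+\nabla_x\cdot J^{(1)}=0$ as $\varepsilon\to0$.

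Finally I would reconcile this with the stated flux. The kinetic flux $J^{(1)}$ and the drift flux in \eqref{eqn:limit} differ by
$$\left(\frac{\rho KE}{b}-\frac{\sigma}{b}K\nabla_x\rho\right)-\left(\frac{\rho KE}{b}-\sigma\rho\,\frac{K\nabla_x b}{b^{2}}\right)=-\sigma\,K\nabla_x\!\left(\frac{\rho}{b}\right),$$
and in two dimensions $\nabla_x\cdot(K\nabla_x\psi)=0$ for every scalar $\psi$, so this difference is divergence-free and the two fluxes yield the same conservation law, namely \eqref{eqn:limit}. The hard part, beyond the double solvability above, is precisely this last algebraic collapse: the manifestly physical $\nabla b$ drift form appears only after using the antisymmetry of $K$ (so that $(\nabla b)^{T}K\nabla b=0$) and the identity $\nabla\cdot(K\nabla\psi)=0$. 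If one insists on a fully rigorous limit rather than a formal expansion, the real difficulty shifts to the passage $\varepsilon\to0$: establishing $\varepsilon$-uniform a priori estimates (mass, energy, entropy dissipation), extracting weak compactness of $\rho$, and controlling the remainder $f-f^{(0)}-\varepsilon f^{(1)}$, for which the hypotheses $E\in W^{1,\infty}$ and $b\in C^{1}(\Omega_x)$ are the natural ingredients.
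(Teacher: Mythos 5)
Your proposal is correct, and all the computations check out: the order matching, the explicit solution $f^{(1)}=\frac{M}{b}\,v\cdot(Ka)$ of the rotation equation (which relies on $(Kv)\cdot v=0$ and $K^{T}K=I_2$), the current $J^{(1)}=\frac{\sigma}{b}Ka$, and the closing observation that the kinetic flux and the stated flux differ by $-\sigma K\nabla_x(\rho/b)$, which is divergence-free in two dimensions --- this last identity is exactly the one the paper invokes. Where you genuinely diverge from the paper is in how the leading-order Maxwellian is identified: the paper never performs a Hilbert expansion and never touches the gyro-rotation operator $\mathcal{L}$. It works entirely with the zeroth and first moment equations \eqref{eqn:fm}--\eqref{eqn:sm}, and obtains $f=\rho W+\order(\varepsilon)$ from the free-energy dissipation identity \eqref{eqn:em}: since $\varepsilon\frac{d}{dt}\left(\mathcal{H}+\sigma\mathcal{S}\right)$ is $\order(\varepsilon)$, the nonnegative dissipation $\frac{1}{\tau}\int_{\Omega}\vert\sigma\nabla_v f+vf\vert^{2}/f\,dxdv$ must vanish in the limit, which forces the Maxwellian; the closure $S\to\sigma\rho I_2$ and the dominance of the $\frac{b(x)K}{\varepsilon}J$ term in \eqref{eqn:sm} then yield the flux --- precisely the moment balance you mention parenthetically as an ``equivalent'' route. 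The trade-off: the paper's entropy argument needs no kernel or solvability analysis and uses only the collisional structure (the magnetic term drops out of the free-energy balance entirely), but its passage from $\order(\varepsilon)$ dissipation to \eqref{eqn:leado} is asserted rather than quantified; your expansion makes the two-scale mechanism transparent (gyro-averaging by $\mathcal{L}$ first, collisional solvability second), produces an explicit $f^{(1)}$ and hence a kinetic formula for the current, and is the natural scaffolding for the rigorous remainder estimates you flag at the end --- at the price of requiring $b\neq 0$ and the care with the double kernel structure that you correctly single out. Note that both derivations are formal; neither the paper nor your sketch carries out the compactness/uniform-estimate program.
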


\begin{proof}

Firstly, we give the limit system of the MVPFP system \eqref{eqn:mvpfp} at $\varepsilon\to 0$. Integrating the first equation of system \eqref{eqn:mvpfp} over $v$ in $\mathbb{R}^N$, one can get:
\begin{equation}\label{eqn:fm}
\varepsilon\partial_t \rho+\nabla_x \cdot J=0.
\end{equation}
Next, one multiplies the first equation of system \eqref{eqn:mvpfp} by $v$ and  integrates over $\Omega_{v}$ yielding:
\begin{equation}\label{eqn:sm}
 \varepsilon  \partial_t J+\nabla_x\cdot S-\rho E-\frac{b(x)K}{\varepsilon}J=-\frac{J}{\tau}.
\end{equation}
In order to get the leading order of term $\nabla_x\cdot S$, and recall the energy equation and the entropy equation:
\begin{equation}\label{eqn:es}
		\left\{
		\begin{aligned}
			&\varepsilon\frac{d\mathcal{H}}{dt}=-\frac{1}{\tau}\int_{\Omega}(\sigma\nabla_v f+vf)\cdot vdxdv\,,\\
			&\varepsilon\frac{d\mathcal{S}}{dt}=-\frac{1}{\tau}\int_{\Omega}(\sigma\nabla_v f+vf)\cdot\frac{\nabla_v f}{f}dxdv.
		\end{aligned}
		\right.
	\end{equation}
 and we can know that
\begin{equation}\label{eqn:em}
 \varepsilon \frac{d}{dt} \left(\mathcal{H}+\sigma \mathcal{S} \right)=-\frac{1}{\tau}\int_{\Omega}\frac{\vert \sigma\nabla_v f+vf\vert^2}{f}dxdv,
\end{equation}
which indicates that 
\begin{equation*}
 -\int_{\Omega}\frac{\vert \sigma\nabla_v f+vf\vert^2}{f}dxdv=\order(\varepsilon),
\end{equation*}
as $\varepsilon\to 0$, and we can get
\begin{equation}\label{eqn:tf}
 -\int_{\Omega}\frac{\vert \sigma\nabla_v f+vf\vert^2}{f}dxdv=0.
\end{equation}
On the other hand, equation \eqref{eqn:tf} can be rewritten as
\begin{equation}
 -\int_{\Omega}\frac{\vert \sigma W\nabla_v \left(f/W\right)\vert^2}{f}dxdv=0,
\end{equation}
with the Maxwellian equilibrium $W = \frac{1}{2\pi\sigma}{\rm exp}\left(-\frac{\vert v\vert^2}{2\sigma}\right)$, which indicates that
\begin{equation}\label{eqn:leado}
f=\rho W+\order(\varepsilon).
\end{equation}
It can be observed that $\int_{\Omega_v}(v\otimes v-\sigma I_2)fdv\to 0$ as $\varepsilon\to 0$ and $$\nabla_x\cdot\left(\frac{K\nabla_x\cdot \sigma\rho I_2}{b(x)}\right)=\nabla_x\cdot\left(\sigma\rho\frac{K\nabla_xb(x)}{b(x)^2}\right).$$
Then by substituting equation \eqref{eqn:leado} into equation \eqref{eqn:sm} and combining the resulting equation with equation \eqref{eqn:fm}, one can get
\begin{equation}
    \partial_t\rho+\nabla_x\cdot\left[\rho\left(\frac{K E}{b(x)}-\sigma\frac{K\nabla_x b(x)}{b(x)^2}\right)\right]=\order(\varepsilon),
\end{equation}
and the limit equation of MVPFP system is
\begin{equation*}
    \partial_t\rho+\nabla_x\cdot\left[\rho\left(\frac{K E}{b(x)}-\sigma\frac{K\nabla_x b(x)}{b(x)^2}\right)\right]=0.
\end{equation*}

\end{proof}
We can further obtain the following corollaries for two special cases.
\begin{corollary}
If $b(x)$ satisfies one of the following conditions,
\begin{itemize}
    \item {\bf{(Uniform magnetic field)}} $b(x)=b_0=Const\neq 0$,
    \item {\bf{(Maximal ordering scaling)}} $b(x)=\tilde{b}(\varepsilon x)$ and $\tilde{b}(0)=b_0=Const\neq 0$,
\end{itemize}
the limit equation will be 
\begin{equation}\label{eqn:limit-spe}
\partial_t\rho+\nabla_x\cdot\left(\rho\frac{K E}{b_0}\right)=0.
\end{equation}
Moreover, (\ref{eqn:limit-spe}) in particular can be further rewritten as
\begin{equation}\label{eqn:limit-homo}
\partial_t\rho+\frac{KE}{b_0}\cdot\nabla_x\rho=0.
\end{equation}
\end{corollary}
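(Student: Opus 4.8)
The plan is to handle the two profiles separately, in each case showing that the gradient-correction term $\sigma K\nabla_x b(x)/b(x)^2$ in the general limit equation \eqref{eqn:limit} disappears and that $b(x)$ may be replaced by the constant $b_0$, and then to derive the transport form \eqref{eqn:limit-homo} by exploiting that $KE$ is divergence-free. For the uniform magnetic field $b(x)\equiv b_0\neq 0$ this is immediate: since $\nabla_x b(x)=0$, the second term inside the divergence in \eqref{eqn:limit} vanishes identically and $b(x)^{-1}=b_0^{-1}$, so substituting directly into \eqref{eqn:limit} gives \eqref{eqn:limit-spe}.

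For the maximal-ordering scaling $b(x)=\tilde b(\varepsilon x)$ with $\tilde b(0)=b_0\neq 0$, the key observation is that the spatial gradient carries an extra factor of $\varepsilon$: by the chain rule $\nabla_x b(x)=\varepsilon(\nabla\tilde b)(\varepsilon x)$. Since $\tilde b\in C^1$, this makes the correction term
$$\sigma\,\frac{K\nabla_x b(x)}{b(x)^2}=\varepsilon\,\sigma\,\frac{K(\nabla\tilde b)(\varepsilon x)}{\tilde b(\varepsilon x)^2}=\order(\varepsilon)$$
uniformly on compact sets, while $b(x)=\tilde b(\varepsilon x)\to\tilde b(0)=b_0$ as $\varepsilon\to 0$. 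Passing to the limit $\varepsilon\to 0$ in \eqref{eqn:limit} with this $\varepsilon$-dependent profile again removes the correction term and sends $b(x)^{-1}\to b_0^{-1}$, yielding \eqref{eqn:limit-spe}.

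For the final reformulation I would expand the divergence in \eqref{eqn:limit-spe} by the product rule, using that $b_0$ is constant,
$$\nabla_x\cdot\left(\rho\,\frac{KE}{b_0}\right)=\frac{1}{b_0}\left(KE\cdot\nabla_x\rho+\rho\,\nabla_x\cdot(KE)\right),$$
and then invoke the electrostatic relation $E=-\nabla_x\phi$. In two dimensions $\nabla_x\cdot(KE)=\partial_{x_1}E_2-\partial_{x_2}E_1$ is precisely the scalar curl of $E$, which vanishes because $E$ is a gradient field (equality of mixed partials, consistent with $E\in W^{1,\infty}$). Hence the second term drops and \eqref{eqn:limit-spe} collapses to the transport equation \eqref{eqn:limit-homo}.

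The bulk of the argument is routine substitution; the one step that genuinely requires care is the maximal-ordering case, where $b$ itself depends on $\varepsilon$. There one must confirm that the asymptotic analysis of the preceding theorem stays valid for an $\varepsilon$-dependent magnetic profile and that the order of limits is consistent — in particular that $(\nabla\tilde b)(\varepsilon x)$ and $\tilde b(\varepsilon x)^{-2}$ remain bounded on the region of interest, so that the correction term is genuinely $\order(\varepsilon)$ and not merely formally small.
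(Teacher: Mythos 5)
Your proof is correct, and it matches what the paper intends: the paper states this corollary without any proof, treating it as an immediate consequence of the limit theorem, and your argument (direct substitution for the uniform field, the chain rule giving $\nabla_x b = \varepsilon(\nabla\tilde b)(\varepsilon x) = \order(\varepsilon)$ for the maximal-ordering case, and $\nabla_x\cdot(KE) = \partial_{x_1}E_2 - \partial_{x_2}E_1 = 0$ since $E = -\nabla_x\phi$ is curl-free) is precisely the routine verification being left to the reader. Your closing caveat about the order of limits in the $\varepsilon$-dependent profile is a legitimate point of care that the paper glosses over, but it does not change the conclusion.
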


\section{Asymptotic-preserving stochastic particle-in-cell method}
In this section, we will construct the time and spatial discretization for the MVPFP system and show its asymptotic-preserving (AP) property. 

\subsection{Stochastic particle-in-cell method}
First, we use Particle-in-cell method to obtain a discrete distribution function which reads 
\begin{equation}\label{SPIC}
f_h(x,v,t)=\sum\limits_{s=1}^{N_p}\alpha_s\delta(x-X_s(t))\delta(v-V_s(t)).
\end{equation}
Here, $N_p$ is the number of particles, while $\alpha_s$ is the weight of the particle.

\begin{theorem}\label{SPICTH}
In the sense of expectation, the approximate distribution function $f_h$ of \eqref{eqn:scale} satisfying \eqref{SPIC} can be obtained by solving the following SDE system
\begin{equation}\label{SDE}
\left\{
\begin{aligned}
&dX_s=\frac{1}{\varepsilon}V_s dt,\\
&dV_s=\frac{1}{\varepsilon}\left(E(X_s)+\frac{b(X_s)}{\varepsilon}KV_s-\frac{1}{\tau}V_s\right)dt+\sqrt{\frac{2\sigma}{\varepsilon\tau}}d{\bf{W}},
\end{aligned}
\right.
\end{equation}
where ${\bf{W}}$ is $2$-dimensional Brownian motion.
\end{theorem}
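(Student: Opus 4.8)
The plan is to recognize Theorem~\ref{SPICTH} as an instance of the classical correspondence between a system of Itô SDEs and the forward Kolmogorov (Fokker--Planck) equation satisfied by the law of the process. I would introduce the augmented phase-space state $Z_s=(X_s,V_s)^T\in\mathbb{R}^4$ and read off from \eqref{SDE} the drift
\[
\mathbf{a}(x,v)=\frac{1}{\varepsilon}\left(v,\ E(x)+\frac{b(x)}{\varepsilon}Kv-\frac{1}{\tau}v\right)^T
\]
together with the degenerate diffusion matrix $\Sigma$, whose only nonzero block acts on the velocity components and equals $\sqrt{2\sigma/(\varepsilon\tau)}\,I_2$. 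Since the Brownian motion enters only through $V_s$, the covariance $\Sigma\Sigma^T$ has a single nonzero $2\times2$ block equal to $(2\sigma/(\varepsilon\tau))I_2$.

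The cleanest route is via Itô's formula. For a smooth, compactly supported test function $\varphi(x,v)$ I would apply Itô's formula to $\varphi(X_s(t),V_s(t))$, using that the quadratic covariation of the velocity is $d\langle V_s^i,V_s^j\rangle=(2\sigma/(\varepsilon\tau))\delta_{ij}\,dt$, and then take expectations to annihilate the martingale term. This yields
\[
\frac{d}{dt}\mathbb{E}[\varphi]=\mathbb{E}\!\left[\frac{1}{\varepsilon}V_s\cdot\nabla_x\varphi+\frac{1}{\varepsilon}\Big(E+\tfrac{b}{\varepsilon}KV_s-\tfrac{1}{\tau}V_s\Big)\cdot\nabla_v\varphi+\frac{\sigma}{\varepsilon\tau}\Delta_v\varphi\right].
\]
Writing $\mathbb{E}[\varphi]=\int\varphi\,p\,dxdv$ for the one-particle density $p(t,x,v)$ (common to all particles, since they share the same law) and integrating by parts to transfer all derivatives onto $p$, this is exactly the weak form of $\partial_t p=\mathcal{L}^*p$, where $\mathcal{L}^*$ is the Fokker--Planck operator associated with \eqref{SDE}.

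It then remains to verify that $\varepsilon\mathcal{L}^*p$ coincides with the right-hand side of the scaled equation \eqref{eqn:scale}. The key elementary identities are $\nabla_v\cdot(Kv)=0$ (the magnetic rotation is divergence free in $v$) and $\nabla_v\cdot E=0$, so the transport and magnetic drifts pass through the velocity divergence unchanged; meanwhile the friction drift $-\tfrac{1}{\tau}v$ contributes $\tfrac{1}{\tau}\nabla_v\cdot(vp)$ and the diffusion contributes $\tfrac{\sigma}{\tau}\Delta_v p$, which together assemble precisely the Fokker--Planck collision $C(p)=\tfrac{1}{\tau}\nabla_v\cdot(\sigma\nabla_v p+vp)$. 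After multiplying by $\varepsilon$ I recover \eqref{eqn:scale} with $p$ in place of $f$. Finally, if the initial states $\{(X_s(0),V_s(0))\}$ are i.i.d. samples from the normalized $f_0$ and the weights satisfy $\sum_s\alpha_s=\int f_0$, linearity of expectation gives $\mathbb{E}[\langle f_h,\varphi\rangle]=\sum_s\alpha_s\,\mathbb{E}[\varphi(X_s,V_s)]=\int\varphi\,f\,dxdv$, i.e.\ $\mathbb{E}[f_h]=f$ in the weak sense, which is the claim.

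I expect the main obstacle to be careful bookkeeping rather than conceptual difficulty: one must track the factor of $\tfrac12$ in Itô's formula against the square in $\Sigma\Sigma^T$ so that the constant $\sqrt{2\sigma/(\varepsilon\tau)}$ produces exactly $\tfrac{\sigma}{\varepsilon\tau}\Delta_v$, and one must correctly recombine the $-\tfrac1\tau v$ drift with this Laplacian to reconstitute the collision operator in divergence form. Some care is also needed to state precisely what ``in the sense of expectation'' means---namely equality of $\mathbb{E}[f_h]$ and $f$ tested against smooth functions---and to note that the drift grows linearly in $v$, so existence and uniqueness of the SDE, and the legitimacy of the expectation manipulations, rely on the regularity assumptions $E\in W^{1,\infty}$ and $b\in C^1(\Omega_x)$ already invoked in the excerpt.
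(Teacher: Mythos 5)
Your proposal is correct and follows essentially the same route as the paper's proof: apply It\^o's formula to a smooth test function evaluated along the particle trajectories, take expectations to annihilate the martingale term, and identify the resulting identity as the weak form of the scaled equation \eqref{eqn:scale}, with the drift $-\tfrac{1}{\tau}v$ and the noise coefficient $\sqrt{2\sigma/(\varepsilon\tau)}$ recombining into the collision operator $C(f)=\tfrac{1}{\tau}\nabla_v\cdot(\sigma\nabla_v f+vf)$. The only differences are presentational --- the paper pairs space-time test functions $\psi(t,x,v)$ directly with the empirical measure $f_h$, whereas you route through the forward Kolmogorov equation for the one-particle law and then sum over weighted particles, making explicit the initial-sampling and weight-normalization conditions that the paper leaves implicit.
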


\begin{proof}
First, we define the following functional
$$\langle f_h,\psi(t,x,v)\rangle:=\int_{[0,T]\times\Omega}f_h\psi(t,x,v) dxdvdt=\sum_{s=1}^{N_p}\alpha_s\int_0^T\psi(t,X_s,V_s)dt,$$
where $\psi$ is a test function satisfying $\psi\in C_{0}^{\infty}([0,T]\times\Omega)$.
Then we can further obtain
$$\left\langle\frac{\partial f_h}{\partial t},\psi(t,x,v)\right\rangle=-\sum_{s=1}^{N_p}\alpha_s
\int_0^T\frac{\partial\psi}{\partial t}(t,X_s,V_s)dt.$$
By taking the expectation of above equation on both sides, it leads to
$$\mathbb{E}\left(\left\langle \frac{\partial f_h}{\partial t},\psi\right\rangle\right)=-\sum_{s=1}^{N_p}\alpha_s
\int_0^T \mathbb{E}\left(\frac{\partial\psi}{\partial t}(t,X_s,V_s)\right)dt.$$
On the other hand, the It{\^o} formula provides
\begin{align*}
d\psi(t,X_s,V_s)&=\left(\frac{\partial\psi}{\partial t}+\frac{V_s}{\varepsilon}\cdot\frac{\partial\psi}{\partial X_s}+\frac{1}{\varepsilon}\left(E(X_s)+\frac{b(X_s)}{\varepsilon}KV_s-\frac{V_s}{\tau}\right)\cdot\frac{\partial\psi}{\partial V_s}+\frac{\sigma}{\varepsilon\tau}\Delta_{V_s}\psi\right)dt\\
&+\sqrt{\frac{2\sigma}{\varepsilon\tau}}\frac{\partial\psi}{\partial V_s}\cdot d{\bf{W}}.
\end{align*}
Since $\psi$ is compactly supported and $\mathbb{E}({\bf{W}})=0$, one can have
$$\mathbb{E}\left(\int_0^T\frac{\partial\psi}{\partial t}dt\right)=-\frac{1}{\varepsilon}\mathbb{E}\left(\int_0^T\left(V_s\cdot\frac{\partial\psi}{\partial X_s}+\left(E(X_s)+\frac{b(X_s)}{\varepsilon}KV_s-\frac{V_s}{\tau}\right)\cdot\frac{\partial\psi}{\partial V_s}+\frac{\sigma}{\tau}\Delta_{V_s}\psi\right)dt\right).$$
And finally we can get
$$\mathbb{E}\left(\left\langle \frac{\partial f_h}{\partial t},\psi\right\rangle\right)=-\frac{1}{\varepsilon}\mathbb{E}\left(\left\langle v\cdot\frac{\partial f_h}{\partial x}+\left(E(x)+\frac{b(x)}{\varepsilon}Kv\right)\cdot\frac{\partial f_h}{\partial v}-\frac{1}{\tau}\frac{\partial}{\partial v}\cdot(vf_h)-\frac{\sigma}{\tau}\Delta_v f_h,\psi\right\rangle\right).$$

\end{proof}

\subsection{Asymptotic behavior of \eqref{SDE} in maximal ordering case}
In this subsection, we discuss the asymptotic behavior of the characteristic line equation of the MVPFP equation in the maximal ordering case.
From Theorem~\ref{SPICTH}, it reads
\begin{equation}\label{SDE:maxorder}
\left\{
\begin{aligned}
&dx=\frac{1}{\varepsilon}v dt,\\
&dv=\frac{1}{\varepsilon}\left(E(x)+\frac{b(x)}{\varepsilon}Kv-\frac{1}{\tau}v\right)dt+\sqrt{\frac{2\sigma}{\varepsilon\tau}}d{\bf{W}},
\end{aligned}
\right.
\end{equation}
where $b(x)=\tilde{b}(\varepsilon x)$.
And we can get the following theorem.
\begin{theorem}\label{GCODE}
Assume that $b\in C^1(\Omega_x)$ and $\Omega_x$ is compact.
Then, in the limit $\varepsilon\to 0$, it follows that $x\to u$, where $u$ corresponds to the guiding center approximation
\begin{equation}\label{GCeq1}
\dot{u}=\frac{KE(u)}{b_0}=:\mathcal{R}_0E(u),\ u(0)=x(0),
\end{equation}
where $b_0=\tilde{b}(0)$.
Equation \eqref{GCeq1} is also the characteristic line equation of \eqref{eqn:limit-homo}.
\end{theorem}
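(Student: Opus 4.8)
The plan is to pass to a guiding-center coordinate that removes the fast $\order(1/\varepsilon)$ gyration from the position, reduce the theorem to a convergence statement for this slow variable, and then close with a Gronwall estimate. Throughout I use the maximal-ordering structure $b(x)=\tilde b(\varepsilon x)$ with $\Omega_x$ compact and $b_0=\tilde b(0)\neq 0$, so that $b(x)=b_0+\order(\varepsilon)$ and $\nabla_x b(x)=\varepsilon\nabla\tilde b(\varepsilon x)=\order(\varepsilon)$ uniformly, and that $v$ stays $\order(1)$ in mean square because the velocity part of \eqref{SDE:maxorder} is, at leading order, an Ornstein--Uhlenbeck process with stationary variance $\sigma$ (matching the Maxwellian $W$).

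First I would introduce the guiding-center variable
$$y=x+\frac{\varepsilon}{b(x)}Kv,$$
which differs from $x$ by $\order(\varepsilon)$ since $v=\order(1)$ and $b$ is bounded below. Applying It\^o's formula to $y$ — noting that $y$ is linear in $v$ and that $x$ carries no noise, so there is no second-order correction — and inserting the drift of the velocity equation, the dangerous $\order(1/\varepsilon)$ contributions cancel: the $v/\varepsilon$ from $dx$ is exactly annihilated by the $\tfrac1\varepsilon K^2v=-\tfrac1\varepsilon v$ produced when $K$ hits the magnetic term $\tfrac{b}{\varepsilon}Kv$. What survives is
$$dy=\frac{KE(x)}{b(x)}\,dt-\frac{Kv}{b(x)\tau}\,dt+\order(\varepsilon)\,dt+\order(\sqrt\varepsilon)\,d\mathbf{W}.$$
The first term is the desired $E\times B$ drift (it converges to $KE(u)/b_0$ once $x\to u$ and $b\to b_0$), the $\order(\varepsilon)\,dt$ collects the $\nabla b$ contributions, and the martingale part has quadratic variation $\order(\varepsilon)$ and disappears.

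The hard part is to show that the friction-through-rotation term $-\tfrac{Kv}{b\tau}$, although pointwise $\order(1)$ and rapidly oscillating, integrates to only $\order(\varepsilon)$. My plan is to trade the oscillatory factor $Kv$ for an exact differential using the velocity equation itself: solving \eqref{SDE:maxorder} for the magnetic term gives
$$Kv\,dt=\frac{\varepsilon^2}{b}\Big(dv-\frac1\varepsilon E\,dt+\frac1{\varepsilon\tau}v\,dt-\sqrt{\tfrac{2\sigma}{\varepsilon\tau}}\,d\mathbf{W}\Big),$$
so that
$$\int_0^t\frac{Kv}{b\tau}\,ds=\frac{\varepsilon^2}{\tau}\int_0^t\frac{1}{b^2}\,dv+\order(\varepsilon),$$
the $E$- and $v$-pieces being $\order(\varepsilon)$ thanks to $E$ bounded and $\int_0^t v\,ds=\varepsilon\,(x(t)-x(0))=\order(\varepsilon)$. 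The remaining stochastic integral is handled by integration by parts, $\int_0^t b^{-2}\,dv=[b^{-2}v]_0^t-\int_0^t v\,d(b^{-2})$; the boundary term is $\order(1)$, and $d(b^{-2})=-2b^{-3}\nabla b\cdot\tfrac{v}{\varepsilon}\,dt=\order(1)\,dt$ because the $\varepsilon$ in $\nabla b$ cancels the $1/\varepsilon$ from $dx$, so $\int_0^t b^{-2}\,dv=\order(1)$ and with the $\varepsilon^2$ prefactor this piece is $\order(\varepsilon^2)$; altogether the friction term integrates to $\order(\varepsilon)$. This is exactly the averaging mechanism by which the fast gyration suppresses the friction drift, and also why no grad-$B$ drift survives in the maximal-ordering scaling.

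Finally I would close the loop. Collecting the estimates gives $y(t)=x(0)+\int_0^t \tfrac{KE(x(s))}{b(x(s))}\,ds+\order(\varepsilon)+M_\varepsilon(t)$ with $\mathbb{E}|M_\varepsilon(t)|^2=\order(\varepsilon)$, while $u$ solves $\dot u=KE(u)/b_0$. Writing the drift difference as $\tfrac{KE(x)}{b(x)}-\tfrac{KE(u)}{b_0}=KE(x)\big(\tfrac1{b(x)}-\tfrac1{b_0}\big)+\tfrac{K(E(x)-E(u))}{b_0}$ and using $b(x)-b_0=\order(\varepsilon)$ together with the Lipschitz bound $E\in W^{1,\infty}$, this difference is controlled by $\tfrac{L}{b_0}|y-u|+\order(\varepsilon)$ (since $|x-y|=\order(\varepsilon)$). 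A Gronwall argument, with the martingale dominated through Doob's inequality, then yields $\mathbb{E}\sup_{[0,T]}|y(t)-u(t)|=\order(\sqrt\varepsilon)$, hence $\mathbb{E}\sup_{[0,T]}|x(t)-u(t)|=\order(\sqrt\varepsilon)\to0$. This gives $x\to u$ uniformly on $[0,T]$ (in mean, hence in probability and almost surely along a subsequence); since $\dot u=KE(u)/b_0$ is precisely the characteristic equation of \eqref{eqn:limit-homo}, the result is consistent with the $\varepsilon\to0$ density limit already established.
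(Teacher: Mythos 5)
Your proposal is correct, but it takes a genuinely different route from the paper's proof. The paper never introduces the guiding-center variable $y=x+\frac{\varepsilon}{b(x)}Kv$; instead it multiplies the velocity equation by $\varepsilon\mathcal{R}_{\varepsilon}$ with $\mathcal{R}_{\varepsilon}=\frac{b_0\tau^2K+\varepsilon\tau I_2}{b_0^2\tau^2+\varepsilon^2}$, chosen precisely so that $\mathcal{R}_{\varepsilon}\left(\frac{b_0}{\varepsilon}K-\frac{1}{\tau}I_2\right)=-\frac{1}{\varepsilon}I_2$; this one algebraic step inverts the magnetic rotation \emph{and} the friction simultaneously, so the system rearranges to $dx=\mathcal{R}_{\varepsilon}E(x)\,dt-\varepsilon\mathcal{R}_{\varepsilon}\,dv+\order(\varepsilon)\,dx+\sqrt{2\sigma\varepsilon/\tau}\,\mathcal{R}_{\varepsilon}\,d\mathbf{W}$, with the limit drift $\mathcal{R}_0E=KE/b_0$ appearing immediately and every other term visibly small. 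In your version the transform cancels only the magnetic term, so the friction-through-rotation term $-Kv/(b\tau)$ survives at $\order(1)$ and must be averaged away separately; your exact-differential and It\^o integration-by-parts argument for this is sound, but it is exactly the work the paper's choice of $\mathcal{R}_{\varepsilon}$ renders unnecessary (there the friction is absorbed into the denominator and contributes only the $\order(\varepsilon)$ piece $\varepsilon\tau I_2$). What your route buys: a complete, quantitative closing — Lipschitz comparison of drifts, Doob's inequality, Gronwall, with an explicit $\order(\sqrt{\varepsilon})$ rate in mean — whereas the paper's proof stops at the algebraic rearrangement and simply asserts uniform convergence to the deterministic trajectory. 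What the paper's route buys: compactness, and the fact that $\mathcal{R}_{\varepsilon}$ (and its $x$-dependent version $\mathcal{R}(y)$) is reused as the central object of the numerical analysis in Theorem~\ref{APTH} and the corrected limit model \eqref{GCeq2}, so its proof doubles as setup for Section 4. One debt you should record explicitly: the uniform-in-$\varepsilon$ bound $\mathbb{E}\vert v(t)\vert^2\lesssim 1$, on which both your friction estimate and the step $\vert x-y\vert=\order(\varepsilon)$ rely, is asserted via the Ornstein--Uhlenbeck analogy but not proved; it follows from $d\vert v\vert^2=\frac{2}{\varepsilon}\left(v\cdot E-\frac{\vert v\vert^2}{\tau}\right)dt+\frac{4\sigma}{\varepsilon\tau}\,dt+(\text{martingale})$ — the magnetic term drops since $K$ is skew-symmetric — together with Young's inequality and Gronwall, and should be stated as a lemma.
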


\begin{proof}
By using the denotation $\mathcal{R}_{\varepsilon}:=\frac{b_0\tau^2K+\varepsilon\tau I_2}{b_0^2\tau^2+\varepsilon^2}$ and multiplying the second equation of \eqref{SDE:maxorder} by $\varepsilon\mathcal{R}_{\varepsilon}$, one can get
\begin{align*}
\varepsilon\mathcal{R}_{\varepsilon}dv&=\left(\mathcal{R}_{\varepsilon}E(x)-\frac{1}{\varepsilon}v\right)dt+\frac{b_0^2\tau^2I_2-b_0\tilde{b}(\varepsilon x)\tau^2I_2+\varepsilon\tau \tilde{b}(\varepsilon x)K-\varepsilon\tau b_0 K}{\varepsilon(b_0^2\tau^2+\varepsilon^2)}vdt+\sqrt{\frac{2\sigma\varepsilon}{\tau}}\mathcal{R}_{\varepsilon}d{\bf{W}}\\
&=\mathcal{R}_{\varepsilon}E(x)dt-dx+\frac{b_0^2\tau^2I_2-b_0\tilde{b}(\varepsilon x)\tau^2I_2+\varepsilon\tau \tilde{b}(\varepsilon x)K-\varepsilon\tau b_0 K}{(b_0^2\tau^2+\varepsilon^2)}dx+\sqrt{\frac{2\sigma\varepsilon}{\tau}}\mathcal{R}_{\varepsilon}d{\bf{W}}.
\end{align*}
Then SDE \eqref{SDE:maxorder} converges uniformly as $\varepsilon\to 0$ to the deterministic trajectory of \eqref{GCeq1}.
\end{proof}
Obviously, Theorem~\ref{GCODE} ensures that the particle method preserves the asymptotic property from micro-equations to macro-equations.
Thus, in this framework, we can further construct AP numerical algorithms.

\section{Time discretization of \eqref{SDE:maxorder}}

For the time discretization of \eqref{SDE:maxorder}, we combine the semi-implicit schemes and the Milstein method.

{\bf APSI1:}
By denoting $\delta=\frac{\Delta t}{\varepsilon}$ and $\lambda=\frac{\Delta t}{\varepsilon^2}$, a first order semi-implicit scheme is given by
\begin{equation}\label{APSI1}
\begin{aligned}
&x^{n+1}=x^n+\delta v^{n+1},\\
&v^{n+1}=v^n+\delta E(x^n)+\lambda \tilde{b}(\varepsilon x^n)K v^{n+1}-\frac{\delta}{\tau}v^{n+1}+\sqrt{\frac{2\sigma\delta}{\tau}}\xi_n,
\end{aligned}
\end{equation}
where $\xi_n\sim\mathcal{N}(0,1)$.
it also equals to
\begin{equation*}
\left(
\begin{array}{c}
x^{n+1}\\
v^{n+1}\\
\end{array}
\right)=
\left(
\begin{array}{cc}
I_2 & -\delta I_2\\
0 & (1+\frac{\delta}{\tau})I_2-\lambda \tilde{b}(\varepsilon x^n)K\\
\end{array}
\right)^{-1}
\left(
\begin{array}{c}
x^n\\
v^n+\delta E(x^n)+\sqrt{\frac{2\sigma\delta}{\tau}}\xi_n\\
\end{array}
\right).
\end{equation*}
For the upper triangular matrix above, its inverse matrix is easy to calculate according to the skew-symmetric matrix $K$.
Specifically, it is
\begin{equation*}
\left(
\begin{array}{cc}
I_2 & \delta \mathcal{M}(x^n)\\
0 &  \mathcal{M}(x^n)
\end{array}
\right),\ 
\end{equation*}
with 
$$
\mathcal{M}(y):=\frac{(1+\frac{\delta}{\tau})I_2+\lambda \tilde{b}(\varepsilon y)K}{(1+\frac{\delta}{\tau})^2+(\lambda \tilde{b}(\varepsilon y))^2}.
$$

{\bf APSI2:} Moreover, a second order scheme combined with Milstein method can be written as
\begin{equation}\label{APSI2}
\begin{aligned}
&x_n^{(1)}=x^n+\gamma\delta v_n^{(1)},\\
&v_n^{(1)}=v^n+\gamma\delta F_n^{(1)},\\
&x^{n+1}=x^n+(1-\gamma)\delta v_n^{(1)}+\gamma\delta v^{n+1},\\
&v^{n+1}=v^n+(1-\gamma)\delta F_n^{(1)}+\gamma\delta F_n^{(2)}+\sqrt{\frac{2\sigma\delta}{\tau}}\xi_n.
\end{aligned}
\end{equation}
where $\gamma=1-1/\sqrt{2}$ and
\begin{align*}
&x_n^{(2)}=x^n+\frac{\delta}{2\gamma}v_n^{(1)},\\
&F_n^{(1)}=E(x^n)+\frac{\tilde{b}(\varepsilon x^n)}{\varepsilon}Kv_n^{(1)}-\frac{1}{\tau}v_n^{(1)},\\
&F_n^{(2)}=E(x_n^{(2)})+\frac{\tilde{b}(\varepsilon x_n^{(2)})}{\varepsilon}K v^{n+1}-\frac{1}{\tau}v^{n+1}.
\end{align*}
For a vector form, it reads
\begin{equation*}
\begin{aligned}
&\left(
\begin{array}{c}
x_n^{(1)}\\
v_n^{(1)}\\
\end{array}
\right)=
\left(
\begin{array}{cc}
I_2 & -\gamma\delta I_2\\
0 & (1+\frac{\gamma\delta}{\tau})I_2-\gamma\lambda \tilde{b}(\varepsilon x^n)K
\end{array}
\right)^{-1}
\left(
\begin{array}{c}
x^n\\
v^n+\gamma\delta E(x^n)
\end{array}
\right),\\
&\left(
\begin{array}{c}
x^{n+1}\\
v^{n+1}\\
\end{array}
\right)=
\left(
\begin{array}{cc}
I_2 & -\gamma\delta I_2\\
0 & (1+\frac{\gamma\delta}{\tau})I_2-\gamma\lambda \tilde{b}(\varepsilon x_n^{(2)})K
\end{array}
\right)^{-1}
\left(
\begin{array}{c}
x^n+(1-\gamma)\delta v_n^{(1)}\\
v^n+(1-\gamma)\delta F_n^{(1)}+\gamma\delta E(x_n^{(2)})+\sqrt{\frac{2\sigma\delta}{\tau}}\xi_n
\end{array}
\right).
\end{aligned}
\end{equation*}

In the following part of this section, we present the asymptotic-preserving property of these algorithms.

\subsection{Uniform estimates of \eqref{APSI1}}
First of all, we have the following theorem for scheme \eqref{APSI1}.
\begin{theorem}\label{APTH}
Assume that $E\in W^{1,\infty}(\Omega_x)$, $b\in C^{1}(\Omega_x)$ and $\Omega_x$ is compact.
Let the solution $(x^N,v^N)$ to equation \eqref{APSI1} in finite time $T$ where $N\Delta{t}=T$ and $M_{\xi}:=\max\{\vert\xi_0\vert,\vert\xi_1\vert,\cdots,\vert\xi_{N-1}\vert\}$.
If $b(x)-b_0=\mathcal{O}(\varepsilon^s)$, there exists constants $C$ and $\lambda_0$ such that when $\lambda\ge\lambda_0$ the following estimate holds for a positive constant $\xi_0$
\begin{equation}\label{Estimate11}
\Vert x^N-u^N\Vert\lesssim\varepsilon^2\left(1+\Vert\varepsilon^{-1}v^0-\mathcal{R}(x^{0})E(x^{0})\Vert\right)+M_{\xi}\sqrt{\frac{2\sigma\varepsilon}{\tau\Delta{t}}}+\frac{\varepsilon^s\tau^2+\varepsilon^2+\varepsilon\tau}{\varepsilon^2+\tau^2},
\end{equation}
where $\mathcal{R}(y):=\frac{\tilde{b}(\varepsilon y)\tau^2K+\varepsilon\tau I_2}{(\tilde{b}(\varepsilon y)\tau)^2+\varepsilon^2}$ and $u^N$ is the numerical solution of the following guiding-center model
\begin{equation}\label{CUdis}
u^{n+1}=u^{n}+\Delta t \mathcal{R}_0E(u^{n}),\ u^0=x^0.
\end{equation}
\end{theorem}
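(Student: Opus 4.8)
The plan is to collapse the two-component recursion \eqref{APSI1} into a single recursion in $x$ and to compare it with \eqref{CUdis} through an intermediate exact-mobility scheme built from $\mathcal{R}$. Solving the linear implicit velocity update gives $v^{n+1}=\mathcal{M}(x^n)\bigl[v^n+\delta E(x^n)+\sqrt{2\sigma\delta/\tau}\,\xi_n\bigr]$. The key structural observation is that the affine map $v\mapsto\mathcal{M}(x^n)\bigl(v+\delta E(x^n)\bigr)$ has the unique fixed point $v^\ast_n=\varepsilon\mathcal{R}(x^n)E(x^n)$, and that, because $\delta\varepsilon=\Delta t$, the position step evaluated at this fixed point reduces exactly to $x^{n+1}=x^n+\Delta t\,\mathcal{R}(x^n)E(x^n)$. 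Subtracting the fixed-point relation from the velocity update and multiplying by $\delta$ then produces the clean decomposition
\[
x^{n+1}=x^n+\Delta t\,\mathcal{R}(x^n)E(x^n)+\delta\mathcal{M}(x^n)\,d^n+\delta\mathcal{M}(x^n)\sqrt{\tfrac{2\sigma\delta}{\tau}}\,\xi_n,\qquad d^n:=v^n-v^\ast_n,
\]
which exhibits \eqref{APSI1} as a perturbation of the exact-mobility scheme driven by the velocity defect $d^n$ and the stochastic forcing.

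The quantitative engine is a pair of uniform bounds on the amplification matrix, $\|\mathcal{M}(y)\|\lesssim\varepsilon^2$ and $\|\delta\mathcal{M}(y)\|\lesssim\varepsilon$, both valid once $\lambda\ge\lambda_0$, since then the skew term $\lambda\tilde b(\varepsilon y)K$ dominates the denominator $(1+\delta/\tau)^2+(\lambda\tilde b)^2$ of $\mathcal{M}$. The first bound makes the velocity defect strongly contractive: from $d^n=\mathcal{M}(x^{n-1})d^{n-1}+\mathcal{M}(x^{n-1})\sqrt{2\sigma\delta/\tau}\,\xi_{n-1}+(v^\ast_{n-1}-v^\ast_n)$, together with $\|v^\ast_{n-1}-v^\ast_n\|\lesssim\varepsilon\|x^n-x^{n-1}\|\lesssim\varepsilon\Delta t$ (Lipschitz continuity of $\mathcal{R}E$ on the compact $\Omega_x$), I would obtain $\|d^n\|\lesssim\|\mathcal{M}\|^n\|d^0\|+\varepsilon\Delta t$ with $\|d^0\|=\varepsilon\|\varepsilon^{-1}v^0-\mathcal{R}(x^0)E(x^0)\|$. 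Inserting this into the perturbation term $\delta\mathcal{M}(x^n)d^n$ and summing the resulting geometric series over $n\le N$ yields exactly the two pieces $\varepsilon^2\|\varepsilon^{-1}v^0-\mathcal{R}(x^0)E(x^0)\|$ (the initial layer, coming from the contracted $d^0$) and $\varepsilon^2$ (the slow drift of the moving fixed point), i.e. the factor $\varepsilon^2\bigl(1+\|\varepsilon^{-1}v^0-\mathcal{R}(x^0)E(x^0)\|\bigr)$.

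The stochastic term is handled within the same summation: the per-step direct contribution $\delta\mathcal{M}(x^n)\sqrt{2\sigma\delta/\tau}\,\xi_n$ has size $\lesssim\varepsilon\sqrt{2\sigma\delta/\tau}\,|\xi_n|=\sqrt{2\sigma\varepsilon\Delta t/\tau}\,|\xi_n|$, so bounding $|\xi_n|\le M_\xi$ and summing the $N=T/\Delta t$ steps gives the noise contribution $M_\xi\sqrt{2\sigma\varepsilon/(\tau\Delta t)}$ (the indirect paths through $d^n$ carry extra factors $\|\mathcal{M}\|\lesssim\varepsilon^2$ and are subsumed). It remains to compare the exact-mobility scheme with \eqref{CUdis}: writing $\mathcal{R}(x^n)E(x^n)-\mathcal{R}_0E(u^n)=\bigl(\mathcal{R}(x^n)-\mathcal{R}_0\bigr)E(x^n)+\mathcal{R}_0\bigl(E(x^n)-E(u^n)\bigr)$, a direct evaluation of $\mathcal{R}(y)-\mathcal{R}_0=\frac{\tilde b\tau^2K+\varepsilon\tau I_2}{(\tilde b\tau)^2+\varepsilon^2}-\frac{K}{b_0}$ under the hypothesis $\tilde b(\varepsilon y)-b_0=\mathcal{O}(\varepsilon^s)$ bounds its norm by $C\,\frac{\varepsilon^s\tau^2+\varepsilon^2+\varepsilon\tau}{\varepsilon^2+\tau^2}$, the last term of \eqref{Estimate11}, while the second piece is $\lesssim\|\mathcal{R}_0\|\,\mathrm{Lip}(E)\,\|x^n-u^n\|$. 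Collecting the three error sources into a single inequality for $\|x^n-u^n\|$ and applying the discrete Gronwall lemma then produces \eqref{Estimate11}.

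The principal obstacle I anticipate is preserving uniformity in $\varepsilon$ throughout. The Gronwall constant must not depend on $\varepsilon$, which forces me first to establish a uniform a priori bound on the iterates $x^n$ inside the compact $\Omega_x$ so that $E$, $\mathcal{R}$ and their Lipschitz constants are uniformly controlled, and to prove the sharp estimates $\|\mathcal{M}\|\lesssim\varepsilon^2$, $\|\delta\mathcal{M}\|\lesssim\varepsilon$ that hold only beyond the threshold $\lambda\ge\lambda_0$. The most delicate point is the geometric bookkeeping: one must verify that the per-step factors $\|\delta\mathcal{M}\|\,\|\mathcal{M}\|^n$ sum to an $O(\varepsilon)$ prefactor rather than an $O(\varepsilon/\Delta t)$ one, so that the accumulated position error stays $O(\varepsilon^2)$ uniformly and does not deteriorate as $\Delta t\to 0$. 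Confirming that the three mechanisms—initial layer, fixed-point drift, and mobility mismatch—do not interact to spoil this uniformity is where the analysis is most demanding.
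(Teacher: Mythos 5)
Your proposal is correct and follows essentially the same route as the paper's proof: your fixed-point identity $v^\ast_n=\varepsilon\mathcal{R}(x^n)E(x^n)$ is precisely the paper's algebraic relation $\lambda\mathcal{M}(y)-\mathcal{R}(y)=-\mathcal{M}(y)\mathcal{R}(y)$, and your defect $d^n=v^n-v^\ast_n$ is (up to the factor $\varepsilon$ and an index shift) the paper's intermediate variable $z^n=\varepsilon^{-1}v^n-\mathcal{R}(x^{n-1})E(x^{n-1})$. The remaining ingredients---contraction of the defect under $\mathcal{M}$ with geometric summation, the bound $\Vert\mathcal{R}(y)-\mathcal{R}_0\Vert\lesssim(\varepsilon^s\tau^2+\varepsilon^2+\varepsilon\tau)/(\varepsilon^2+\tau^2)$, and the discrete Gronwall recursion for $e^n=x^n-u^n$---coincide with the paper's, differing only in bookkeeping (the paper works at fixed $\Delta t$ with $\Vert\mathcal{M}\Vert_\infty\lesssim\varepsilon^2/\Delta t$, which is what your stated $\Vert\mathcal{M}\Vert\lesssim\varepsilon^2$ amounts to, while the bound actually driving both arguments is $\delta\Vert\mathcal{M}\Vert\lesssim\varepsilon$).
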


\begin{proof}
Since $E\in W^{1,\infty}$ and $\Omega_x$ is compact, electric field and their first-order derivatives are all bounded.
Thus, a common upper bound $\kappa$ is taken for the estimates.
Firstly we introducing intermediate variables
\begin{equation}\label{znmid}
z^n=\varepsilon^{-1}v^n-\mathcal{R}(x^{n-1})E(x^{n-1}),\ n=1,\cdots,N.
\end{equation}
On the other hand, we have
$$v^1=\mathcal{M}(x^0)\left(v^0+\delta E(x^0)+\sqrt{\frac{2\sigma\delta}{\tau}}\xi\right).$$
By noticing that $\lambda\mathcal{M}(y)-\mathcal{R}(y)=-\mathcal{M}(y)\mathcal{R}(y)$, then it yields that 
$$z^1=\mathcal{M}(x^0)\left(\varepsilon^{-1}v^0-\mathcal{R}(x^{0})E(x^0)+\sqrt{\frac{2\sigma\delta}{\varepsilon^2\tau}}\xi\right).$$
And the following equality holds according to \eqref{znmid}
\begin{equation}\label{znplus}
z^{n+1}=\mathcal{M}(x^n)\left(z^n-\left(\mathcal{R}(x^{n})E(x^n)-\mathcal{R}(x^{n-1})E(x^{n-1})\right)+\sqrt{\frac{2\sigma\delta}{\varepsilon^2\tau}}\xi_n\right)
\end{equation}
for $\forall n\ge 1$.
Due to \eqref{APSI1}, it has
\begin{equation}\label{Cdis2}
x^n=x^{n-1}+\delta v^n=x^{n-1}+\Delta t z^n+\Delta t \mathcal{R}(x^{n-1})E(x^{n-1}).
\end{equation}
Let $\Lambda=\left\Vert\mathcal{M}(y)\right\Vert_{\infty}$, $a:=\Lambda\max\{1+2\kappa\Delta t,1+2\kappa^2\Delta t\}$, $b:=2\Lambda \kappa^4\Delta t$ and $c:=\Lambda\Vert\varepsilon^{-1}v^0-\mathcal{R}(x^{0})E(x^0)\Vert.$
Fixing the time step $\Delta t$, when $\varepsilon\to 0$, it has $\lambda\to\infty$.
Thus, there exist constants $\lambda_0,\ \alpha>0$ such that $a\leq \alpha<1$ when $\lambda\ge\lambda_0$.
After taking the norm on \eqref{znplus}'s both sides, it yields 
$$\Vert z^{n+1}\Vert\leq a^n c+\frac{1}{1-a}\left(b+M_{\xi}\sqrt{\frac{2\sigma\varepsilon}{\tau\Delta{t}}}\right).$$
By denoting $e^n:=x^n-u^n$, it can be obtained that
\begin{equation*}
\Vert e^n \Vert\leq \frac{a}{\Lambda}\Vert e^{n-1} \Vert+\Delta t a^{n-1}c+\Delta t\frac{b}{1-a}+\frac{M_{\xi}}{1-a}\sqrt{\frac{2\sigma\varepsilon\Delta{t}}{\tau}}+\Delta{t}\frac{\varepsilon^s\tau^2+\varepsilon^2+\varepsilon\tau}{\varepsilon^2+\tau^2},
\end{equation*}
according to
\begin{equation}\label{Rdiff}
\mathcal{R}(y)-\mathcal{R}_0\lesssim\frac{\varepsilon^s\tau^2+\varepsilon^2+\varepsilon\tau}{\varepsilon^2+\tau^2}.
\end{equation}
Finally, (\ref{Estimate11}) holds due to $e^0=0$.

\end{proof}

It can be found from Theorem~\ref{APTH} that the strong magnetic field suppresses the randomness of numerical solutions, thereby ensuring the asymptotic-preserving property of the algorithm.

\begin{corollary}
If the assumptions in theorem (\ref{APTH}) hold, it has the following estimate:
\begin{equation}\label{Estimate12}
\Vert\mathbb{E}(x^N)-u^N\Vert\lesssim\varepsilon^2\left(1+\Vert\varepsilon^{-1}v^0-\mathcal{R}(x^{0})E(x^{0})\Vert\right)+\frac{\varepsilon^s\tau^2+\varepsilon^2+\varepsilon\tau}{\varepsilon^2+\tau^2}.
\end{equation}
\end{corollary}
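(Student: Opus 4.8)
The plan is to re-run the estimate from the proof of Theorem~\ref{APTH} after taking expectations, exploiting that each noise increment is centered and independent of the current state, so that its net contribution to $\mathbb{E}(x^N)$ cancels and we are left with exactly \eqref{Estimate11} with the stochastic term $M_\xi\sqrt{2\sigma\varepsilon/(\tau\Delta t)}$ deleted. I would work with the same auxiliary quantities $z^n$ and $e^n=x^n-u^n$, starting from the recursion \eqref{znplus} and from the error recursion built out of \eqref{Cdis2} and \eqref{CUdis}, namely
\[
\mathbb{E}(e^n)=\mathbb{E}(e^{n-1})+\Delta t\,\mathbb{E}(z^n)
+\Delta t\,\mathbb{E}\big[\mathcal{R}(x^{n-1})E(x^{n-1})-\mathcal{R}_0E(u^{n-1})\big],
\]
so that the task reduces to bounding $\Vert\mathbb{E}(z^n)\Vert$ by the noise-free part of the Theorem~\ref{APTH} estimate.

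The single source of randomness in \eqref{znplus} is the increment $\mathcal{M}(x^n)\sqrt{2\sigma\delta/(\varepsilon^2\tau)}\,\xi_n$. Since $x^n$—and hence $\mathcal{M}(x^n)$, $\mathcal{R}(x^{n})E(x^n)$, and $z^n$—is determined by $\xi_0,\dots,\xi_{n-1}$, it is independent of $\xi_n$; together with $\mathbb{E}(\xi_n)=0$ this gives $\mathbb{E}[\mathcal{M}(x^n)\xi_n]=\mathbb{E}[\mathcal{M}(x^n)]\,\mathbb{E}[\xi_n]=0$. Taking expectations in \eqref{znplus} therefore removes the explicit noise term for the most recent increment, and the geometric-sum argument that produced \eqref{Estimate11}, now applied to the vectors $\mathbb{E}(z^n)$ and $\mathbb{E}(e^n)=\mathbb{E}(x^n)-u^n$, reproduces every contribution except the one proportional to $M_\xi$.

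The hard part is that $\mathcal{M}(x^n)$ is itself noise-dependent, so the cancellation is clean only for the newest increment: when \eqref{znplus} is unrolled, an earlier $\xi_j$ is multiplied by the product $\mathcal{M}(x^{n})\cdots\mathcal{M}(x^{j+1})$, which depends on $\xi_j$ through $x^{j+1},\dots,x^{n}$, so these cross terms are not term-by-term zero in expectation. I would control them by freezing the contraction at the background field, writing $\mathcal{M}(x^n)=\mathcal{M}_0+R_n$ with $\mathcal{M}_0$ the deterministic value at $b_0$ and $\Vert R_n\Vert_\infty=\order(\varepsilon^s)$, which follows from the hypothesis $b(x)-b_0=\order(\varepsilon^s)$ together with \eqref{Rdiff}. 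The deterministic factor $\mathcal{M}_0$ commutes with $\mathbb{E}$ and yields the exact cancellation of the noise, closing a linear recursion for $\mathbb{E}(z^n)$ itself rather than for $\mathbb{E}\Vert z^n\Vert$; the random remainder $R_n$ enters at order $\varepsilon^s$ and is absorbed into the systematic term $\frac{\varepsilon^s\tau^2+\varepsilon^2+\varepsilon\tau}{\varepsilon^2+\tau^2}$ already present in \eqref{Estimate12}. Equivalently, one may argue along the filtration $\mathcal{F}_n=\sigma(\xi_0,\dots,\xi_{n-1})$ via the tower property, so that at each step only the newest centered increment has to be averaged out.

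With $\Vert\mathbb{E}(z^n)\Vert$ bounded by this noise-free estimate, I would close the argument by feeding it into the displayed recursion for $\mathbb{E}(e^n)$, using $\Vert\mathcal{R}(y)-\mathcal{R}_0\Vert\lesssim\frac{\varepsilon^s\tau^2+\varepsilon^2+\varepsilon\tau}{\varepsilon^2+\tau^2}$ from \eqref{Rdiff} and the uniform Lipschitz bound $\kappa$ on $E$, then iterating from $e^0=0$ exactly as in Theorem~\ref{APTH} to obtain \eqref{Estimate12}. It is worth stressing why a shortcut through Jensen's inequality fails: $\Vert\mathbb{E}(x^N)-u^N\Vert\le\mathbb{E}\Vert x^N-u^N\Vert$ merely returns \eqref{Estimate11} with $M_\xi$ replaced by $\mathbb{E}(M_\xi)$, so the genuine improvement requires keeping the expectation inside the vector recursion so that the centered increments cancel before any norm is taken.
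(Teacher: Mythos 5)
Your proposal follows the same core route as the paper's own proof, which consists of the single sentence that \eqref{Estimate12} is ``obvious since $\xi_n\sim\mathcal{N}(0,1)$'': taking expectations annihilates the centered noise and deletes the $M_\xi$ term from \eqref{Estimate11}. Your elaboration of why this is actually legitimate goes well beyond the paper and is, for the diffusion part, correct: you rightly observe that the cancellation is clean only for the newest increment, since when \eqref{znplus} is unrolled an earlier $\xi_j$ is multiplied by the random product $\mathcal{M}(x^n)\cdots\mathcal{M}(x^{j+1})$; and your fix --- freezing the contraction as $\mathcal{M}(x^n)=\mathcal{M}_0+R_n$ with $\mathcal{M}_0$ deterministic and $\Vert R_n\Vert$ of order $\varepsilon^s$ (in fact $\order(\varepsilon^{s+2}/\Delta t)$, since $\Vert\mathcal{M}\Vert\lesssim 1/\lambda$), then averaging out only the newest increment along the filtration $\mathcal{F}_n=\sigma(\xi_0,\dots,\xi_{n-1})$ --- is sound and absorbable into the systematic term, exactly as you claim.

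There is, however, one genuine gap, and it is of precisely the type you dismantled for the noise: the drift. To close your displayed recursion for $\mathbb{E}(e^n)$ you must control $\Vert\mathbb{E}[\mathcal{R}(x^{n-1})E(x^{n-1})]-\mathcal{R}_0E(u^{n-1})\Vert$; after splitting off the systematic part via \eqref{Rdiff}, what remains is $\Vert\mathcal{R}_0\bigl(\mathbb{E}[E(x^{n-1})]-E(u^{n-1})\bigr)\Vert$, and ``the uniform Lipschitz bound $\kappa$ on $E$'' only yields $\kappa\,\mathbb{E}\Vert x^{n-1}-u^{n-1}\Vert$, not $\kappa\Vert\mathbb{E}(x^{n-1})-u^{n-1}\Vert$: expectation does not commute with a nonlinear $E$, so the recursion does not close in $\Vert\mathbb{E}(e^n)\Vert$ alone, and falling back on $\mathbb{E}\Vert e^{n-1}\Vert$ re-imports the very $M_\xi$-type term you are trying to remove. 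The step is rigorous as written only when $E$ is affine (which happens to hold in the paper's benchmark, $E(x)=-x$). For general $E$ you would need an additional ingredient: a fluctuation bound such as $\mathbb{E}\Vert x^n-\mathbb{E}(x^n)\Vert^2\lesssim\varepsilon T/\tau$ (obtainable from the same contraction estimates, since each step injects noise of size $\sqrt{\varepsilon\Delta t/\tau}$ into $x$), combined with $E\in C^2$, giving $\Vert\mathbb{E}[E(x^{n-1})]-E(\mathbb{E}x^{n-1})\Vert\lesssim\varepsilon/\tau$, which is then absorbed into $\frac{\varepsilon^s\tau^2+\varepsilon^2+\varepsilon\tau}{\varepsilon^2+\tau^2}$; under the paper's bare $W^{1,\infty}$ hypothesis this step costs $\order(\sqrt{\varepsilon/\tau})$, which exceeds the claimed bound. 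In fairness, the paper's one-sentence proof silently skips both subtleties; you resolved the first and overlooked the second.
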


\begin{proof}
It's obvious that (\ref{Estimate12}) holds since $\xi_n\sim \mathcal{N}(0,1)$.
\end{proof}

Furthermore, by eliminating the error appearing at \eqref{Rdiff}, we can attain a better estimate.
\begin{corollary}
If the assumptions in theorem (\ref{APTH}) hold, it has the following estimate:
\begin{equation}
\Vert\mathbb{E}(x^N)-u^N\Vert\lesssim\varepsilon^2\left(1+\Vert\varepsilon^{-1}v^0-\mathcal{R}(x^{0})E(x^{0})\Vert\right),
\end{equation}
where $u^N$ is the numerical solution of the following model
\begin{equation}\label{GCeq2}
\dot{u}=\mathcal{R}(u)E(u),\ u(0)=x(0),
\end{equation}
with algorithm
\begin{equation*}
u^{n+1}=u^{n}+\Delta t \mathcal{R}(u^{n})E(u^{n}),\ u^0=x^0.
\end{equation*}
\end{corollary}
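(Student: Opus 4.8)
The plan is to rerun the argument of Theorem~\ref{APTH} essentially verbatim, changing only the comparison trajectory $u^n$ so as to remove the \emph{single} source of the $\frac{\varepsilon^s\tau^2+\varepsilon^2+\varepsilon\tau}{\varepsilon^2+\tau^2}$ contribution, namely the discrepancy \eqref{Rdiff} between $\mathcal{R}(\cdot)$ and the frozen matrix $\mathcal{R}_0$. Everything in the proof of Theorem~\ref{APTH} preceding the error recursion --- the definition \eqref{znmid} of $z^n$, the one-step identity \eqref{znplus}, the position update \eqref{Cdis2}, and the resulting estimate $\Vert z^{n+1}\Vert\le a^n c+\frac{1}{1-a}\bigl(b+M_{\xi}\sqrt{2\sigma\varepsilon/(\tau\Delta t)}\bigr)$ --- makes no reference to the comparison model and is therefore inherited unchanged. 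In particular one retains the key strong-field fact that $\Lambda=\Vert\mathcal{M}(\cdot)\Vert_{\infty}=\mathcal{O}(\varepsilon^2)$ as $\lambda\to\infty$, so that $a$, $b$, $c$ are all $\mathcal{O}(\varepsilon^2)$ (with $c$ carrying the factor $\Vert\varepsilon^{-1}v^0-\mathcal{R}(x^0)E(x^0)\Vert$) while $a/\Lambda=1+\mathcal{O}(\Delta t)$.

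Next I would take $u^n$ to be the discrete guiding-center solution of \eqref{GCeq2}, i.e. $u^{n+1}=u^n+\Delta t\,\mathcal{R}(u^n)E(u^n)$, and set $e^n=x^n-u^n$. Subtracting this recursion from \eqref{Cdis2} gives
$$e^n=e^{n-1}+\Delta t\,z^n+\Delta t\bigl(\mathcal{R}(x^{n-1})E(x^{n-1})-\mathcal{R}(u^{n-1})E(u^{n-1})\bigr).$$
The decisive point is that the final bracket now compares $\mathcal{R}(\cdot)E(\cdot)$ at two arguments, rather than the old mismatch $\mathcal{R}(x^{n-1})E(x^{n-1})-\mathcal{R}_0E(u^{n-1})$; by the uniform Lipschitz continuity of $y\mapsto\mathcal{R}(y)E(y)$ it is bounded by $L\Vert e^{n-1}\Vert$, so \emph{no} additive term of order $\frac{\varepsilon^s\tau^2+\varepsilon^2+\varepsilon\tau}{\varepsilon^2+\tau^2}$ is produced. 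This yields the clean recursion $\Vert e^n\Vert\lesssim(1+C\Delta t)\Vert e^{n-1}\Vert+\Delta t\Vert z^n\Vert+(\text{noise})$.

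Finally I would pass to expectation. Because the noise enters $z^n$, and hence $e^n$, through the increments $\mathcal{M}(x^m)\sqrt{2\sigma\delta/(\varepsilon^2\tau)}\,\xi_m$ with each $\xi_m$ mean-zero and independent of the past, the signed noise contribution to $\mathbb{E}(e^N)$ cancels to leading order --- this is precisely the mechanism by which \eqref{Estimate12} discarded the $M_{\xi}$ term --- leaving only the deterministic part, which is $\mathcal{O}(\varepsilon^2)$ per step through $a$, $b$, $c$. A discrete Grönwall inequality over the $N=T/\Delta t$ steps, using $e^0=0$ and $a/\Lambda=1+\mathcal{O}(\Delta t)$, then converts these per-step $\mathcal{O}(\varepsilon^2)$ contributions into the global bound $\Vert\mathbb{E}(x^N)-u^N\Vert\lesssim\varepsilon^2\bigl(1+\Vert\varepsilon^{-1}v^0-\mathcal{R}(x^0)E(x^0)\Vert\bigr)$.

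I expect the obstacles to be twofold. First, the uniform-in-$\varepsilon$ Lipschitz constant $L$ for $\mathcal{R}(\cdot)E(\cdot)$ must be controlled: since $\mathcal{R}(y)$ contains $\tilde b(\varepsilon y)$ in both numerator and denominator, one needs $b_0\neq 0$ so that the denominator $(\tilde b(\varepsilon y)\tau)^2+\varepsilon^2$ stays bounded below uniformly for small $\varepsilon$ (the $\varepsilon$-scaling of the argument in fact makes the $y$-gradient small, which works in our favor). Second, the rigorous cancellation of the noise in expectation is delicate, because the factors $\mathcal{M}(x^m)$ and $\mathcal{R}(x^m)E(x^m)$ at later steps depend nonlinearly on the earlier noise $\xi_n$; making the ``signed noise has zero mean'' statement precise requires either a conditional-expectation (martingale) decomposition or a linearization in which the nonlinear remainder is shown to be of higher order in $\varepsilon$ and $\Delta t$.
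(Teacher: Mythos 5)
Your proposal is correct and takes essentially the same route the paper intends: the paper offers no explicit proof of this corollary beyond the remark that it follows ``by eliminating the error appearing at \eqref{Rdiff}'', i.e.\ by rerunning the proof of Theorem~\ref{APTH} against the discretization of $\dot u=\mathcal{R}(u)E(u)$ so that the $\mathcal{R}$-versus-$\mathcal{R}_0$ mismatch never enters the error recursion, and then discarding the $M_{\xi}$ noise term in expectation exactly as in the preceding corollary ($\xi_n\sim\mathcal{N}(0,1)$, mean zero and independent of the past). Your fleshed-out version --- the unchanged $z^n$ analysis, the Lipschitz bound for $y\mapsto\mathcal{R}(y)E(y)$ requiring $b_0\neq 0$, and the conditional-expectation justification of the noise cancellation --- is the same argument, carried out with more care than the paper itself provides.
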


\begin{corollary}\label{orederTH}
Under the assumptions of Theorem~\ref{APTH}, the proposed scheme \eqref{APSI1} is of weak order $1$ when $\varepsilon\ll 1$. More precisely, it has
\begin{equation}\label{Estimateoreder}
\Vert \mathbb{E}(\phi(x^N))-\mathbb{E}(\phi(x(T)))\Vert\lesssim \Delta{t},
\end{equation}
for all $\phi\in C_0^4$ and $T=N\Delta{t}$ when $\varepsilon\ll 1$.
\end{corollary}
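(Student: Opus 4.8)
The plan is to never analyse the weak error of the stiff SDE \eqref{SDE:maxorder} directly, but instead to route the comparison through the deterministic guiding-center flow, for which both the continuous and the discrete dynamics are well understood from the earlier results. Concretely, I would introduce the exact guiding-center trajectory $u(t)$ solving \eqref{GCeq1} together with its explicit-Euler discretisation $u^N$ from \eqref{CUdis}, both deterministic, and split the weak error by the triangle inequality as
\begin{equation*}
\mathbb{E}(\phi(x^N))-\mathbb{E}(\phi(x(T)))=\underbrace{[\mathbb{E}(\phi(x^N))-\phi(u^N)]}_{(\mathrm{I})}+\underbrace{[\phi(u^N)-\phi(u(T))]}_{(\mathrm{II})}+\underbrace{[\phi(u(T))-\mathbb{E}(\phi(x(T)))]}_{(\mathrm{III})}.
\end{equation*}
The idea is that (\mathrm{II}) supplies the advertised $O(\Delta t)$ rate, while (\mathrm{I}) and (\mathrm{III}) are purely asymptotic corrections that are forced to vanish as $\varepsilon\to0$ by the AP estimates, hence are dominated by $\Delta t$ in the regime $\varepsilon\ll1$.

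For term (\mathrm{II}) I would argue classically: since $E\in W^{1,\infty}$ and $\Omega_x$ is compact, the field $\mathcal{R}_0E$ is Lipschitz, so $u(t)$ is $C^1$ with bounded derivatives and \eqref{CUdis} is exactly the forward-Euler method applied to \eqref{GCeq1}. Standard one-step-error-plus-accumulation gives $\Vert u^N-u(T)\Vert\lesssim\Delta t$, and because $\phi\in C_0^4$ is globally Lipschitz this transfers to $\vert(\mathrm{II})\vert\lesssim\Delta t$. This is the only genuinely $\Delta t$-dependent contribution, and it is where the weak order one originates.

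For term (\mathrm{I}) I would Taylor-expand $\phi(x^N)-\phi(u^N)=\nabla\phi(u^N)\cdot(x^N-u^N)+O(\Vert x^N-u^N\Vert^2)$ and take expectations. The linear part is bounded by $\Vert\nabla\phi\Vert_\infty\,\Vert\mathbb{E}(x^N)-u^N\Vert$, which the corollary yielding \eqref{Estimate12} controls by $O(\varepsilon^2)$ plus the $\tfrac{\varepsilon^s\tau^2+\varepsilon^2+\varepsilon\tau}{\varepsilon^2+\tau^2}$ factor; the quadratic remainder needs the second moment $\mathbb{E}\Vert x^N-u^N\Vert^2$, obtained by squaring the pathwise bound \eqref{Estimate11} and invoking the Gaussian maximal inequality $\mathbb{E}(M_\xi^2)\lesssim\log N$. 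Both pieces tend to zero as $\varepsilon\to0$ (up to a harmless $\log(1/\Delta t)$ in the variance), so $\vert(\mathrm{I})\vert=o(1)$. For term (\mathrm{III}) I would invoke the continuous counterpart of Theorem~\ref{GCODE}: as $\varepsilon\to0$ the law of the exact solution $x(T)$ of \eqref{SDE:maxorder} concentrates on $u(T)$, so a first-order Taylor expansion bounds $\vert(\mathrm{III})\vert$ by $\Vert\mathbb{E}(x(T))-u(T)\Vert$ plus a variance term, both $o(1)$ as $\varepsilon\to0$. Collecting the three estimates and choosing $\varepsilon$ small enough that the $\varepsilon$-dependent contributions of (\mathrm{I}) and (\mathrm{III}) fall below the $O(\Delta t)$ of (\mathrm{II}) yields \eqref{Estimateoreder}.

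The hard part will be term (\mathrm{III}): the results quoted in the excerpt give the discrete uniform estimates and the pathwise limit of Theorem~\ref{GCODE}, but not an explicit weak rate for the continuous stiff SDE, so rigorously quantifying $\Vert\mathbb{E}(x(T))-u(T)\Vert$ — an averaging/homogenisation estimate for the fast stochastic gyro-motion — is the delicate step, as is making the joint dependence on $\varepsilon$ and $\Delta t$ explicit so that the claim "$\varepsilon\ll1$" genuinely absorbs the $\varepsilon$-terms into $O(\Delta t)$. A secondary technical nuisance is the $M_\xi$ contribution to the second moment in (\mathrm{I}), which one must check does not degrade the rate once $\varepsilon$ is taken suitably small relative to $\Delta t$.
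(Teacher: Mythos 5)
Your proposal follows essentially the same route as the paper's own proof: the identical three-term triangle-inequality decomposition through the discrete guiding-center solution $u^N$ of \eqref{CUdis} and the continuous solution $u(T)$ of \eqref{GCeq1}, with the forward-Euler error supplying the $O(\Delta t)$ and the AP estimates (Theorem~\ref{APTH} on the discrete side, Theorem~\ref{GCODE} on the continuous side) absorbing the $\varepsilon$-dependent terms in the regime $\varepsilon\ll 1$. If anything, your version is more careful than the paper's, which works with $\mathbb{E}(x^N)-\mathbb{E}(x(T))$ directly rather than with $\phi$, and which simply asserts a quantitative rate for $\Vert u(T)-\mathbb{E}(x(T))\Vert$ by citing Theorem~\ref{GCODE} — precisely the averaging estimate you correctly flag as the delicate, not-fully-justified step.
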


\begin{proof}

By the triangle inequality, we can obtain that
$$\Vert \mathbb{E}(x^N)-\mathbb{E}(x(T))\Vert\le\Vert \mathbb{E}(x^N)-u^N\Vert+\Vert u^N-u(T)\Vert+\Vert u(T)-\mathbb{E}(x(T))\Vert.$$
According to Theorem~\ref{GCODE} and Theorem~\ref{APTH},  we have
$$\Vert \mathbb{E}(x^N)-u^N\Vert+\Vert u(T)-\mathbb{E}(x(T))\Vert\lesssim \sqrt{\frac{\sigma\varepsilon}{\tau}}+\frac{\varepsilon^s\tau^2+\varepsilon^2+\varepsilon\tau}{\varepsilon^2+\tau^2}+\frac{\varepsilon^s\tau^2+\varepsilon^{s+1}\tau}{\varepsilon^2+\tau^2}+\varepsilon.$$
Then \eqref{Estimateoreder} holds due to \eqref{CUdis}.

\end{proof}

\subsection{Uniform estimates of \eqref{APSI2}}
For scheme (\ref{APSI2}), it follows: 
\begin{theorem}\label{APTH2}
Under the assumptions of theorem (\ref{APTH}), the estimate (\ref{Estimate11}) holds for $(x^N,v^N)$ to equation (\ref{APSI2}) while $u^N$ is the numerical solution of the guiding-center model
\begin{equation}\label{CUdis2}
u^{n+1}=u^{n}+(1-\gamma)\Delta{t}\mathcal{R}_0E(u^{n})+\gamma\Delta{t}\mathcal{R}_0E(u^{(1)}),
\end{equation}
where
$$u^{(1)}=u^{n}+\frac{\Delta t}{2\gamma}\mathcal{R}_0E(u^{n}),\ u^0=x^0.$$
\end{theorem}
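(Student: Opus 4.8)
The plan is to transplant the contraction argument of Theorem~\ref{APTH} to the two-stage structure of \eqref{APSI2}, the new ingredient being that each implicit velocity solve produces its own contraction operator. Writing
\[
\mathcal{M}_\gamma(y):=\left[\left(1+\tfrac{\gamma\delta}{\tau}\right)I_2-\gamma\lambda\tilde{b}(\varepsilon y)K\right]^{-1}
=\frac{\left(1+\frac{\gamma\delta}{\tau}\right)I_2+\gamma\lambda\tilde{b}(\varepsilon y)K}{\left(1+\frac{\gamma\delta}{\tau}\right)^2+\left(\gamma\lambda\tilde{b}(\varepsilon y)\right)^2},
\]
the two solves in \eqref{APSI2} amount to left multiplication by $\mathcal{M}_\gamma(x^n)$ and $\mathcal{M}_\gamma(x_n^{(2)})$. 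The algebraic identity underlying \eqref{znplus} carries over in the form $\gamma\lambda\,\mathcal{M}_\gamma(y)-\mathcal{R}(y)=-\mathcal{M}_\gamma(y)\mathcal{R}(y)$ (checked exactly as for $\mathcal{M}$, using $\lambda\varepsilon=\delta$ and $K^2=-I_2$), which is precisely what makes the fixed point $\mathcal{R}(x)E(x)$ of the scaled velocity the same at both stages.

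Accordingly I would introduce the stage-wise deviations
\[
z_n^{(1)}=\varepsilon^{-1}v_n^{(1)}-\mathcal{R}(x^{n})E(x^{n}),
\qquad
z^{n+1}=\varepsilon^{-1}v^{n+1}-\mathcal{R}(x_n^{(2)})E(x_n^{(2)}),
\]
in analogy with \eqref{znmid}. Inserting $F_n^{(1)},F_n^{(2)}$ into \eqref{APSI2} and applying the identity, the first solve collapses to $z_n^{(1)}=\mathcal{M}_\gamma(x^n)\big(z^n-(\mathcal{R}E\text{-increment})\big)$ and the final solve to a recursion of the same shape as \eqref{znplus}, now with $\mathcal{M}_\gamma(x_n^{(2)})$ in front and the noise $\sqrt{2\sigma\delta/(\varepsilon^2\tau)}\,\xi_n$ entering only at this stage. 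Fixing $\Delta t$ and sending $\varepsilon\to0$ forces $\lambda\to\infty$, so $\|\mathcal{M}_\gamma(\cdot)\|_\infty\le\alpha<1$ once $\lambda\ge\lambda_0$; the geometric-series bound of Theorem~\ref{APTH} then controls $\|z_n^{(1)}\|$ and $\|z^{n+1}\|$ by a source of size $b+M_\xi\sqrt{2\sigma\varepsilon/(\tau\Delta t)}$, the $\mathcal{R}E$-increments being Lipschitz-bounded through $E\in W^{1,\infty}$ and $b\in C^1$.

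With the velocity deviations controlled I would close the recursion for $e^n=x^n-u^n$ by subtracting \eqref{CUdis2} from $x^{n+1}=x^n+(1-\gamma)\delta v_n^{(1)}+\gamma\delta v^{n+1}$. Since \eqref{CUdis2} uses the same weights $(1-\gamma,\gamma)$ and the same midpoint evaluation $x_n^{(2)}=x^n+\frac{\delta}{2\gamma}v_n^{(1)}$, the drift parts cancel against $\mathcal{R}_0E$ up to the discrepancy $\mathcal{R}-\mathcal{R}_0$ of \eqref{Rdiff} and a Lipschitz term in $e^n$; the remainder organizes into $e^n$ times a contraction plus a source dominated by the $z$-bounds, by \eqref{Rdiff}, and by $M_\xi\sqrt{2\sigma\varepsilon\Delta t/\tau}$. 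A discrete Gr\"onwall step from $e^0=0$ then reproduces exactly \eqref{Estimate11}.

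The main obstacle is the genuine cross-stage coupling absent from APSI1: the final velocity update carries the term $(1-\gamma)\delta F_n^{(1)}$, and since $F_n^{(1)}=\big(\tilde{b}(\varepsilon x^n)K-\tfrac{\varepsilon}{\tau}I_2\big)z_n^{(1)}$ it is effectively $\lambda$ times the stage-one deviation after the $\varepsilon^{-1}$ scaling. A naive bound would blow up as $\varepsilon\to0$; the point is that this $\lambda$-amplified contribution is premultiplied by $\mathcal{M}_\gamma(x_n^{(2)})=\order(\lambda^{-1})$, so the product remains $\order(\|z_n^{(1)}\|)$ and feeds back harmlessly. Making this cancellation rigorous—together with verifying that the composite $\mathcal{M}_\gamma(x^n)\mathcal{M}_\gamma(x_n^{(2)})$ of operators evaluated at two different spatial points is still a uniform contraction, and that the midpoint stage is consistent with $u^{(1)}$ to the required order—is where the bulk of the work lies; everything else mirrors the proof of Theorem~\ref{APTH}.
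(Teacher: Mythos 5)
Your proposal is correct and follows essentially the same route as the paper's proof: the paper likewise introduces the two stage-wise deviations $z_1^{n}=\varepsilon^{-1}v_{n-1}^{(1)}-\mathcal{R}(x^{n-1})E(x^{n-1})$ and $z_2^{n}=\varepsilon^{-1}v^{n}-\mathcal{R}(x_{n-1}^{(2)})E(x_{n-1}^{(2)})$, uses the identity $\gamma\lambda\mathcal{M}_{\gamma}(y)-\mathcal{R}(y)=-\mathcal{M}_{\gamma}(y)\mathcal{R}(y)$ to obtain coupled contraction recursions, and closes with the $e^n=x^n-u^n$ recursion, \eqref{Rdiff}, and $e^0=0$. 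In particular, the ``main obstacle'' you flag is resolved exactly as you anticipate: the cross-stage term $\lambda(1-\gamma)\left(\tilde{b}(\varepsilon x^n)K-\frac{\varepsilon}{\tau}I_2\right)z_1^{n+1}$ appears premultiplied by $\mathcal{M}_{\gamma}(x_n^{(2)})=\order(\lambda^{-1})$, so summing the two recursions yields a single uniform contraction for $\Vert z_1^{n}\Vert+\Vert z_2^{n}\Vert$, just as in your sketch.
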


\begin{proof}

The proof is similar to Theorem~\ref{APTH}, we briefly present the key points.
First, we define
\begin{equation*}
z^n_{1}=\varepsilon^{-1}v_{n-1}^{(1)}-\mathcal{R}(x^{n-1})E(x^{n-1}),\ z^n_{2}=\varepsilon^{-1}v^n-\mathcal{R}(x_{n-1}^{(2)})E(x_{n-1}^{(2)}).
\end{equation*}
Then for scheme (\ref{APSI2}) due to $\gamma\lambda\mathcal{M}_{\gamma}(y)-\mathcal{R}(y)=-\mathcal{M}_{\gamma}(y)\mathcal{R}(y)$, it follows that 
\begin{align*}
z^{n+1}_{1}=&\mathcal{M}_{\gamma}(x^{n})\left(z_2^{n}+\mathcal{R}(x_{n-1}^{(2)})E(x_{n-1}^{(2)})-\mathcal{R}(x^{n})E(x^{n})\right),\\ 
z^{n+1}_{2}=&\mathcal{M}_{\gamma}(x_n^{(2)})\left(z_2^n+\mathcal{R}(x_{n-1}^{(2)})E(x_{n-1}^{(2)})-\mathcal{R}(x_{n}^{(2)})E(x_{n}^{(2)})+\lambda(1-\gamma)\left(\tilde{b}(\varepsilon x^n)K-\frac{\varepsilon}{\tau}I_2\right)z_1^{n+1}+\sqrt{\frac{2\sigma\delta}{\varepsilon^2\tau}}\xi_n\right),
\end{align*}
where 
$$
\mathcal{M}_{\gamma}(y):=\frac{(1+\frac{\gamma\delta}{\tau})I_2+\gamma\lambda \tilde{b}(\varepsilon y)K}{(1+\frac{\gamma\delta}{\tau})^2+(\gamma\lambda\tilde{b}(\varepsilon y))^2}.
$$
By denoting $\Lambda=\left\Vert\mathcal{M}_{\gamma}(y)\right\Vert_{\infty}$, it has
\begin{equation}\label{errz12}
\begin{aligned}
&\Vert z^{n+1}_{1}\Vert\lesssim\Lambda(\Vert z^{n}_{2}\Vert+\Delta t\Vert z^{n}_{1}\Vert+\Delta t),\\ 
&\Vert z^{n+1}_{2}\Vert\lesssim\Lambda\left(\left(\frac{\Delta t}{2\gamma}+(1-\gamma)\lambda\right)\Vert z^{n+1}_{1}\Vert+\Delta t\Vert z^{n}_{1}\Vert+\Vert z^{n}_{2}\Vert+\Delta t+M_{\xi}\sqrt{\frac{2\sigma\delta}{\varepsilon^2\tau}}\right),
\end{aligned}
\end{equation}
due to
\begin{align*}
&\Vert x^{(2)}_{n-1}-x^n\Vert=\delta\Vert v_{n-1}^{(1)}-\gamma v^n\Vert\lesssim\Delta t(\Vert z^{n}_{1}\Vert+\gamma\Vert z^{n}_{2}\Vert+1),\\ 
&\Vert x^{(2)}_{n-1}-x^{(2)}_{n}\Vert\lesssim\Delta t\left(\frac{1}{2\gamma}\Vert z^{n+1}_{1}\Vert+\Vert z^{n}_{1}\Vert+\gamma\Vert z^{n}_{2}\Vert+1\right).
\end{align*}
Then (\ref{errz12}) leads to
\begin{equation*}
\Vert z^{n+1}_{1}\Vert+\Vert z^{n+1}_{2}\Vert\lesssim\Lambda\left(\Vert z^{n}_{1}\Vert+\Vert z^{n}_{2}\Vert+\Delta t+M_{\xi}\sqrt{\frac{2\sigma\delta}{\varepsilon^2\tau}}\right).
\end{equation*}
On the other hand, by taking difference between $x^{n+1}$ and $u^{n+1}$, (\ref{Estimate11}) holds due to
\begin{equation*}
\Vert e^{n+1}\Vert\lesssim\Vert e^{n}\Vert+\Delta t\Vert z^{n+1}_{1}\Vert+\Delta t\Vert z^{n+1}_{2}\Vert+\Delta{t}\frac{\varepsilon^s\tau^2+\varepsilon^2+\varepsilon\tau}{\varepsilon^2+\tau^2}
\end{equation*}
and
\begin{align*}
&z^1_{1}=\mathcal{M}_{\gamma}(x^{0})\left(\varepsilon^{-1}v^0-\mathcal{R}(x^{0})E(x^{0})\right),\\ 
&z^1_{2}=\mathcal{M}_{\gamma}(x_0^{(2)})\left(\varepsilon^{-1}v^0-\mathcal{R}(x_{0}^{(2)})E(x_{0}^{(2)})+\lambda(1-\gamma)\left(\tilde{b}(\varepsilon x^0)K-\frac{\varepsilon}{\tau}I_2\right)z_1^{1}+\sqrt{\frac{2\sigma\delta}{\varepsilon^2\tau}}\xi_0\right).
\end{align*}
    
\end{proof}

\begin{corollary}\label{orederTH2}
Under the assumptions of Theorem~\ref{APTH}, the proposed scheme \eqref{APSI2} is of weak order $2$ when $\varepsilon\ll 1$. More precisely, it has
\begin{equation}\label{Estimateoreder2}
\Vert \mathbb{E}(\phi(x^N))-\mathbb{E}(\phi(x(T)))\Vert\lesssim \Delta{t}^2,
\end{equation}
for all $\phi\in C_0^4$ and $T=N\Delta{t}$ when $\varepsilon\ll 1$.
\end{corollary}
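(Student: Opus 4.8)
The plan is to follow the three-term decomposition already used in the proof of Corollary~\ref{orederTH}, now invoking the second-order uniform estimate of Theorem~\ref{APTH2} in place of Theorem~\ref{APTH}, and to replace the first-order guiding-center update by the two-stage scheme~\eqref{CUdis2}. First I would reduce the weak error on $\phi$ to an error on the mean trajectory. Since $\phi\in C_0^4$ has bounded derivatives, a Taylor expansion gives $\mathbb{E}(\phi(x^N))=\phi(\mathbb{E}(x^N))+\tfrac12\mathbb{E}[(x^N-\mathbb{E}(x^N))^T\nabla^2\phi\,(x^N-\mathbb{E}(x^N))]+\cdots$, and the fluctuation terms are controlled by the variance of $x^N$, which Theorem~\ref{APTH2} shows is suppressed at order $\varepsilon$ by the strong magnetic field (the stochastic contribution enters only through $M_\xi\sqrt{2\sigma\varepsilon/(\tau\Delta t)}$). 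Hence for $\varepsilon\ll1$ the weak error is governed by $\Vert\mathbb{E}(x^N)-\mathbb{E}(x(T))\Vert$, and boundedness of $\nabla\phi$ lets me reduce to controlling this mean error.

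Next I would apply the triangle inequality
\begin{equation*}
\Vert \mathbb{E}(x^N)-\mathbb{E}(x(T))\Vert\le\Vert \mathbb{E}(x^N)-u^N\Vert+\Vert u^N-u(T)\Vert+\Vert u(T)-\mathbb{E}(x(T))\Vert,
\end{equation*}
where $u^N$ is now the output of the guiding-center scheme~\eqref{CUdis2} and $u(T)$ solves the limit ODE~\eqref{GCeq1}. The first term is bounded by the expectation version of Theorem~\ref{APTH2}, giving $\order(\varepsilon^2)$ together with the same $\varepsilon$-dependent remainder $\tfrac{\varepsilon^s\tau^2+\varepsilon^2+\varepsilon\tau}{\varepsilon^2+\tau^2}$ as in~\eqref{Estimate12}; the third term is the continuous asymptotic-limit error controlled by Theorem~\ref{GCODE}. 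Both are $\varepsilon$-dependent and, for $\varepsilon$ chosen small enough relative to $\Delta t$, fall below $\Delta t^2$.

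The crux of the second-order claim is the middle term $\Vert u^N-u(T)\Vert$, which is purely the global error of the deterministic scheme~\eqref{CUdis2} applied to $\dot u=\mathcal{R}_0E(u)$. I would recognize~\eqref{CUdis2} as a two-stage explicit Runge--Kutta method for the vector field $g(u):=\mathcal{R}_0E(u)$, with node $c_2=a_{21}=\tfrac{1}{2\gamma}$ and weights $b_1=1-\gamma$, $b_2=\gamma$. The classical order-two conditions $b_1+b_2=1$ and $b_2c_2=\tfrac12$ are then satisfied identically in $\gamma$, so a Taylor expansion of one step yields a local truncation error of size $\order(\Delta t^3)$; since $E\in W^{1,\infty}$ and $\Omega_x$ is compact, $g$ is Lipschitz and a standard discrete Gronwall accumulation upgrades this to $\Vert u^N-u(T)\Vert\lesssim\Delta t^2$, from which~\eqref{Estimateoreder2} follows.

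I expect the main obstacle to be bookkeeping the interaction between the two small parameters $\varepsilon$ and $\Delta t$: the uniform estimate of Theorem~\ref{APTH2} carries several $\varepsilon$-dependent remainders (and a stochastic contribution scaling like $\sqrt{\varepsilon/\Delta t}$) that are only $\order(\Delta t)$-harmless in the first-order setting, whereas here they must be shown to be dominated by $\Delta t^2$. Making ``$\varepsilon\ll1$'' precise enough that all these contributions---including the suppressed variance used in the $\phi$-reduction---sit below $\Delta t^2$, while keeping the Runge--Kutta order analysis clean, is the delicate part; the order-conditions computation itself is routine once the Butcher tableau is identified.
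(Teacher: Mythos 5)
The paper states this corollary without any proof, so the natural benchmark is its proof of the first-order analogue, Corollary~\ref{orederTH}; your proposal is precisely that argument adapted to \eqref{APSI2}, and it is correct. You use the same three-term triangle-inequality decomposition, bound the two $\varepsilon$-dependent end terms via Theorem~\ref{APTH2} (in expectation, where the $M_\xi$ term drops out since $\mathbb{E}(\xi_n)=0$) and Theorem~\ref{GCODE}, and supply the one genuinely new ingredient that the paper leaves entirely implicit: that \eqref{CUdis2} is a two-stage Runge--Kutta scheme for $\dot u=\mathcal{R}_0E(u)$ with Butcher data $c_2=a_{21}=\tfrac{1}{2\gamma}$, $b_1=1-\gamma$, $b_2=\gamma$, whose order-two conditions $b_1+b_2=1$ and $b_2c_2=\gamma\cdot\tfrac{1}{2\gamma}=\tfrac12$ hold identically in $\gamma$, making the middle term $\lesssim\Delta t^2$. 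Your explicit reduction of the weak error on $\phi$ to the mean-trajectory error (Taylor expansion of $\phi$ plus variance suppression by the strong field) is in fact more careful than the paper, which in Corollary~\ref{orederTH} silently replaces $\Vert\mathbb{E}(\phi(x^N))-\mathbb{E}(\phi(x(T)))\Vert$ by $\Vert\mathbb{E}(x^N)-\mathbb{E}(x(T))\Vert$. One caveat you inherit from the paper rather than introduce yourself: the $\order(\Delta t^3)$ local truncation error of the Runge--Kutta step requires $E$ (hence $g=\mathcal{R}_0E$) to be $C^2$, which is strictly stronger than the $W^{1,\infty}$ regularity assumed in Theorem~\ref{APTH}.
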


\section{Numerical experiments}

This section is devoted to present the numerical results from the proposed numerical schemes for verifying the analysis results. 
By simulating the motion of a single particle, the asymptotic-preserving properties of our schemes can be observed and are coincide with our theoretical results.
Then an application of a $2+2$-dimensional MVPFP system is presented to display the advantage of our algorithm.
Moreover, in this numerical experiment, we have used the finite element method in~\cite{GAJ2022Hamiltonian} for spatial discretization.

\subsection{Benchmark problem}
First, a numerical benchmark test of the charged particle system \eqref{SDE:maxorder} is performed to show the efficiency of our proposed algorithms and verify estimates.
We consider the electromagnetic field with $E(x)=-x,b(x)=1+\varepsilon\sin(\sqrt{x_1^2+x_2^2}),$ and the initial data $x(0)=(0.3,0.2)^T,v(0)=(-0.7,0.08)^T$ if not otherwise specified.

\begin{figure}[h!]
\centering
\subfigure[]{
\includegraphics[scale=.45]{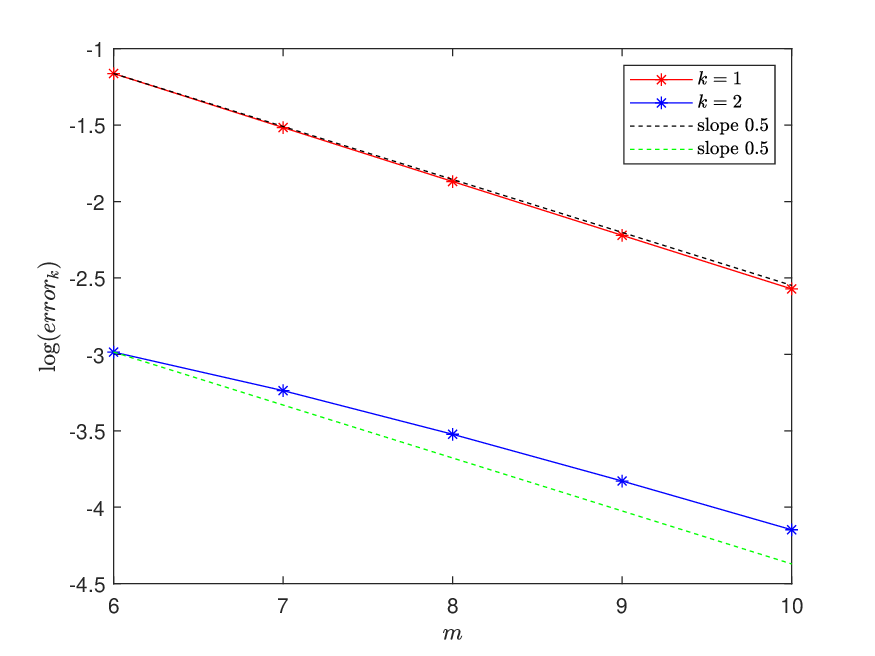}
}
\quad
\subfigure[]{
\includegraphics[scale=.45]{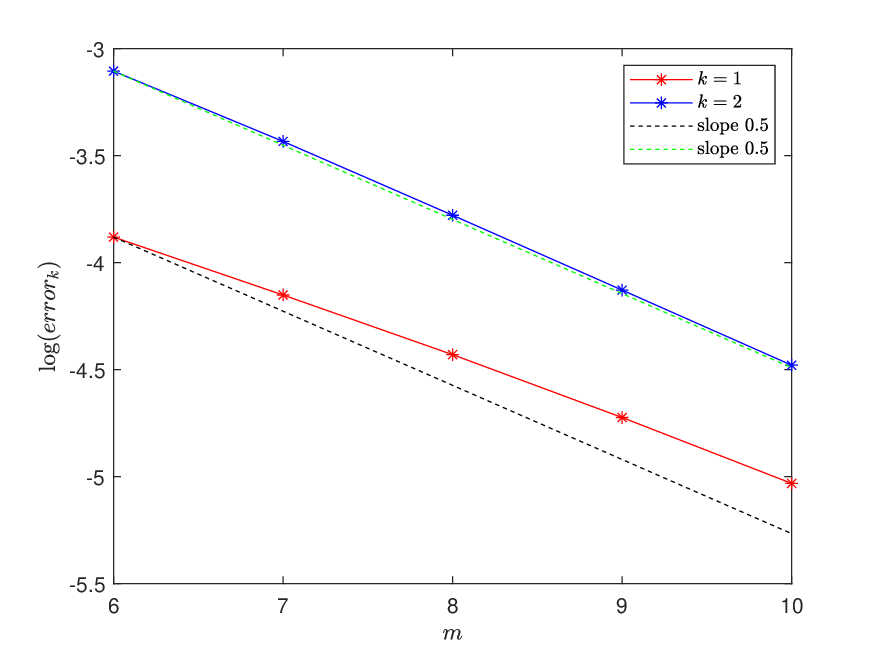}
}
\subfigure[]{
\includegraphics[scale=.45]{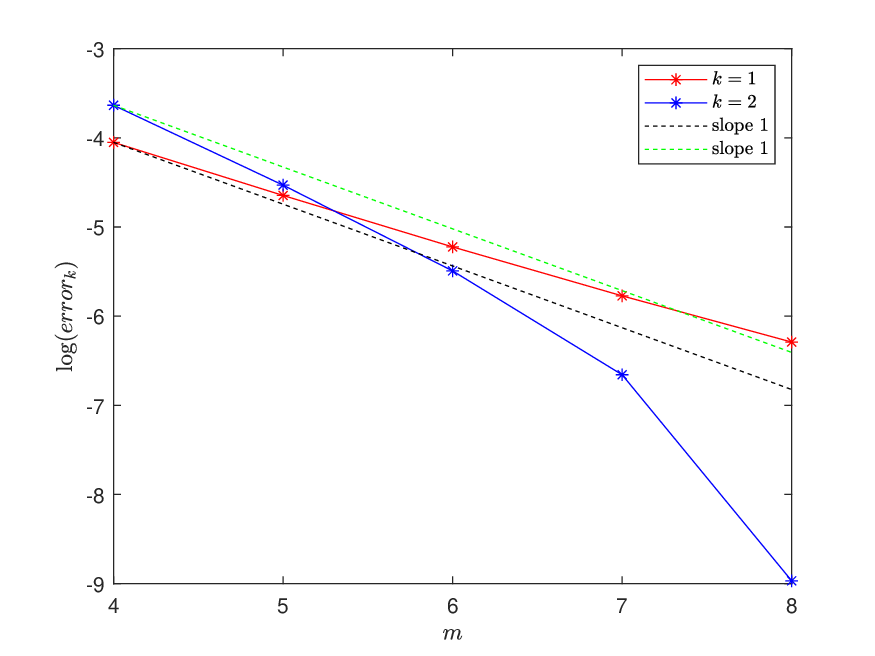}
}
\quad
\subfigure[]{
\includegraphics[scale=.45]{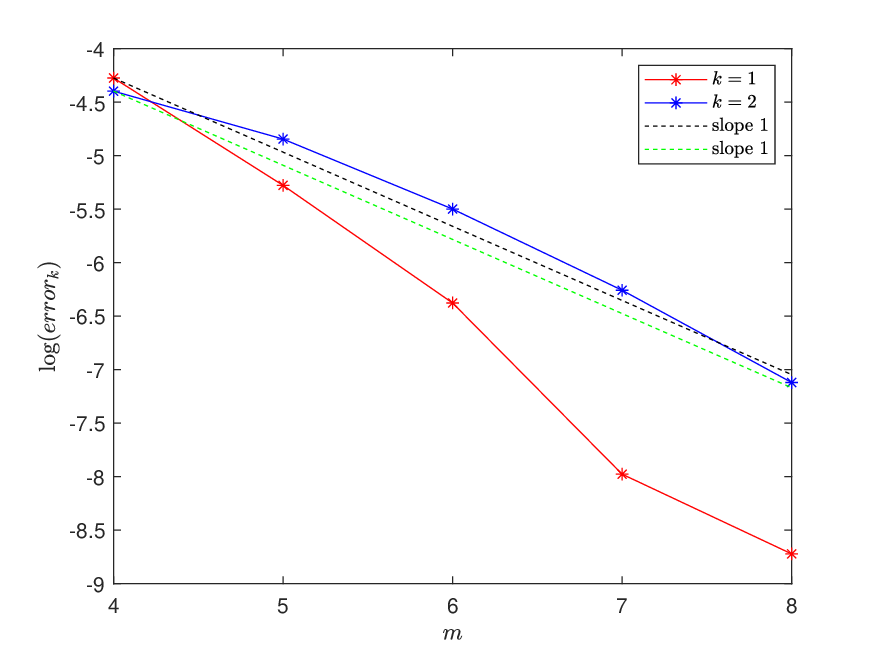}
}
\caption{Estimates $error_k=\vert x_k^N-u_k^N\vert$ of semi-implicit schemes with time step $\Delta t=\pi/30$ for $\varepsilon=2^{-m}$ and  $T=\pi$. (a)(c): APSI1 (\ref{APSI1}). (b)(d): APSI2 (\ref{APSI2}). (a)(b): $\sigma=\tau=1$. (c)(d): $\sigma=2^{-6},\tau=2^{6}$ (i.e. $\sigma\le\varepsilon\tau$).}
\label{img1}
\end{figure}

\begin{figure}[h!]
\centering
\subfigure[]{
\includegraphics[scale=.45]{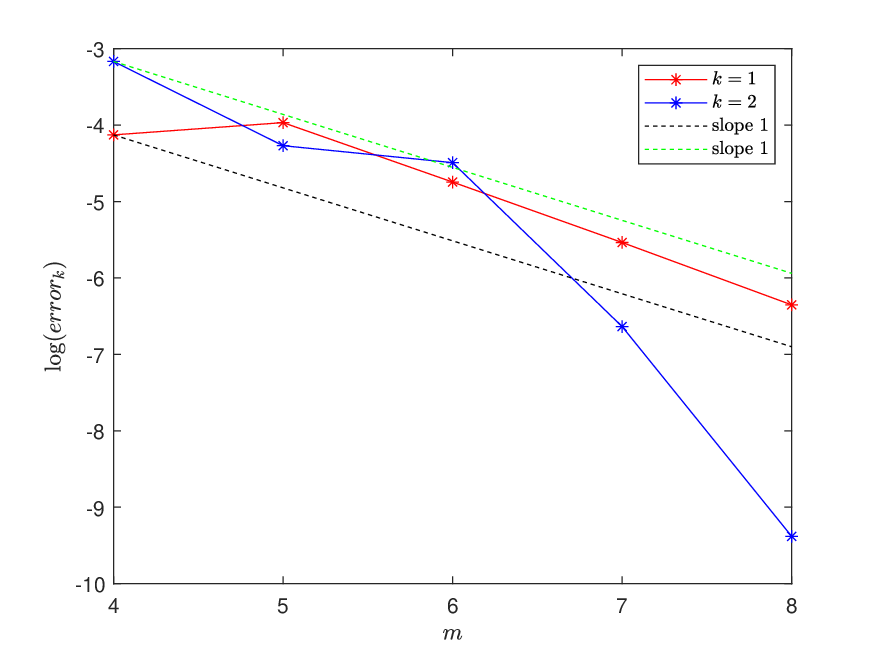}
}
\quad
\subfigure[]{
\includegraphics[scale=.45]{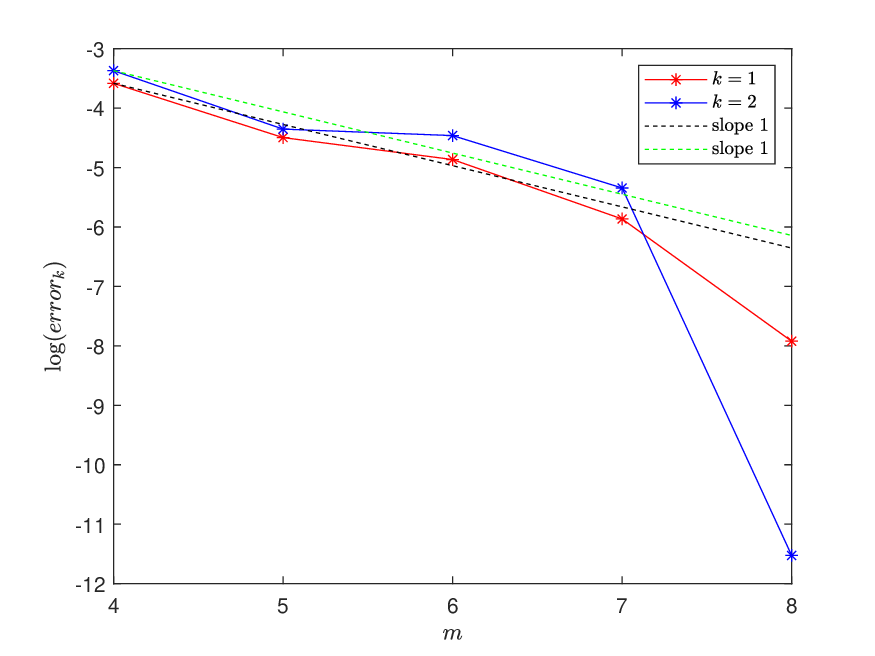}
}
\subfigure[]{
\includegraphics[scale=.45]{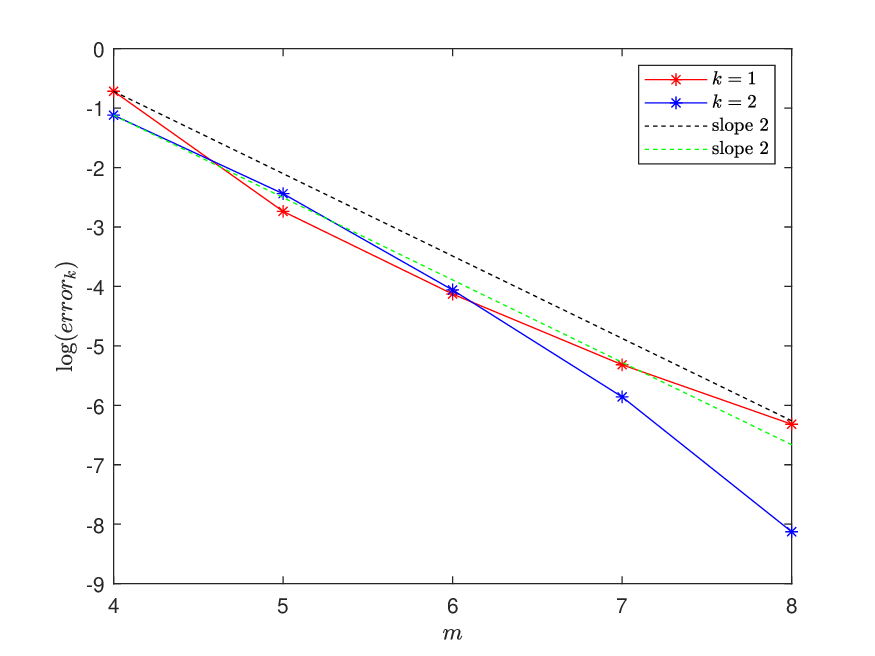}
}
\quad
\subfigure[]{
\includegraphics[scale=.45]{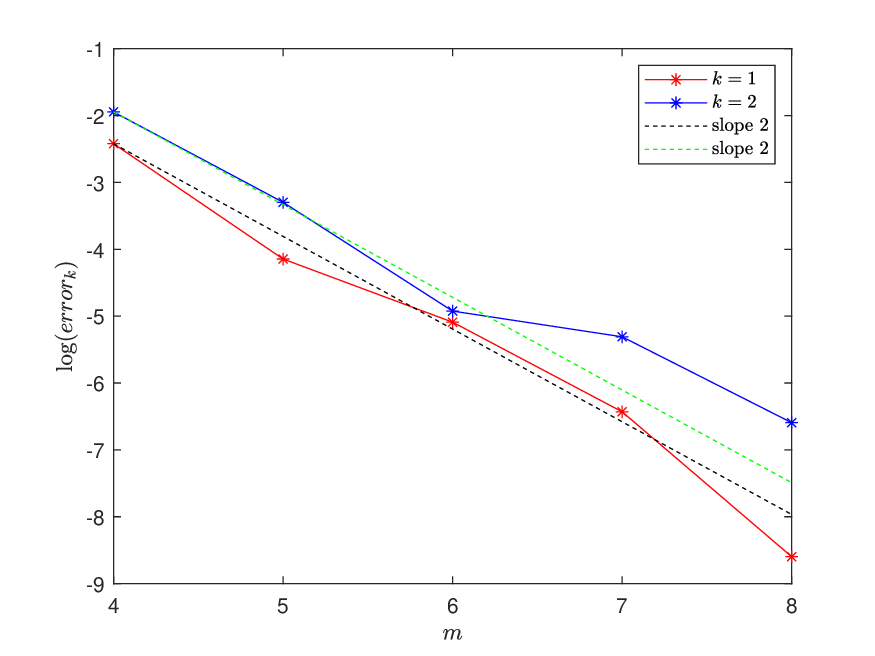}
}
\caption{Estimates $error_k=\vert\mathbb{E}(x_k^N)-u_k^N\vert$ of semi-implicit schemes with time step $\Delta t=\pi/30$ for $\varepsilon=2^{-m}$ at final time $T=\pi$ for $10^4$ Brownian paths when $\sigma=\tau=1$. (a)(c): APSI1 (\ref{APSI1}). (b)(d): APSI2 (\ref{APSI2}). (a)(b): $u^N$ is the numerical solution of \eqref{GCeq1}. (c)(d): $x(0)=(10,14)^T,v(0)=\mathcal{O}(\varepsilon)$, $u^N$ is the numerical solution of \eqref{GCeq2}.}
\label{img2}
\end{figure}

\begin{figure}[h!]
\centering
\subfigure[]{
\includegraphics[scale=.45]{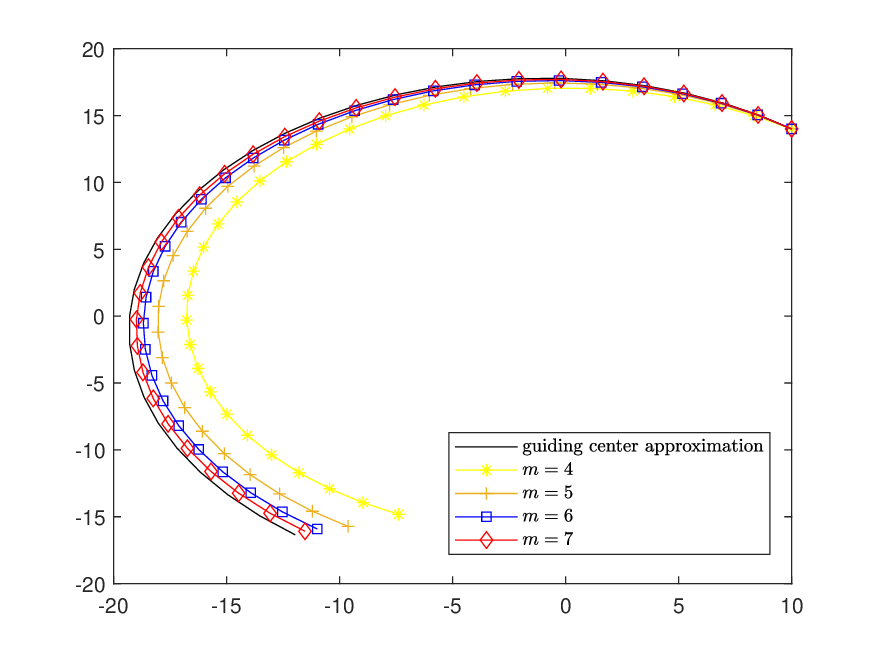}
}
\quad
\subfigure[]{
\includegraphics[scale=.45]{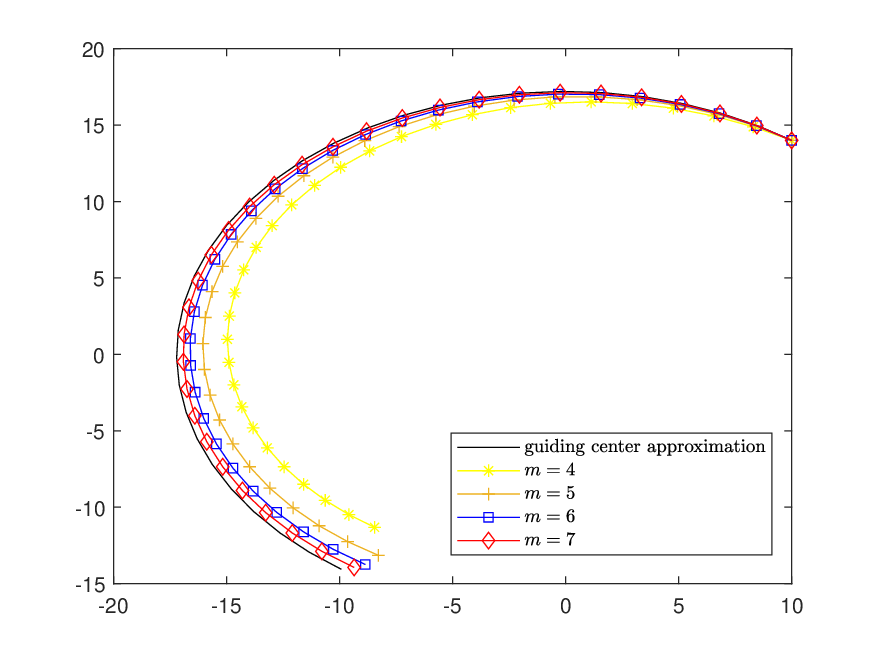}
}
\caption{Comparison between the solution of the guiding center model and the expectations of the solution solving by proposed algorithms with time step $\Delta t=\pi/30$ for $\varepsilon=2^{-m}$ at final time $T=\pi$ for $10^4$ Brownian paths when $\sigma=\tau=1$, $x(0)=(10,14)^T$. (a): APSI1 (\ref{APSI1}). (b): APSI2 (\ref{APSI2}).}
\label{img3}
\end{figure}

In Figure~\ref{img1}, we plot the estimates $error_k=\vert x_k^N-u_k^N\vert$ of the proposed schemes with time step $\Delta t=\pi/30$ for $\varepsilon=2^{-m}$ at the final time $T=\pi$.
It can be observed that the numerical errors match well with our theoretical estimations in Theorem~\ref{APTH}.
When the collision is small, their approximate order with the limit model can reach first order.
Then in Figure~\ref{img2}, we plot the estimates $error_k=\vert\mathbb{E}(x_k^N)-u_k^N\vert$.
After modifying the initial data $v(0)=\mathcal{O}(\varepsilon)$, it leads to $\Vert\mathbb{E}(x_k^N)-u_k^N\Vert=\mathcal{O}(\varepsilon^{2})$ due to $\Vert\varepsilon^{-1}v^0-\mathcal{R}(x^{0})E(x^{0})\Vert=\mathcal{O}(1)$ where $u^N$ is the numerical solution of \eqref{GCeq2}.
The figures are consistent with the results of our corollaries.
The trajectories calculating by numerical methods are shown in Figure~\ref{img3}.
When $\varepsilon\to 0$, the trajectory tends towards the numerical solution of the guiding center system.

\begin{figure}[h!]
\centering
\subfigure[]{
\includegraphics[scale=.45]{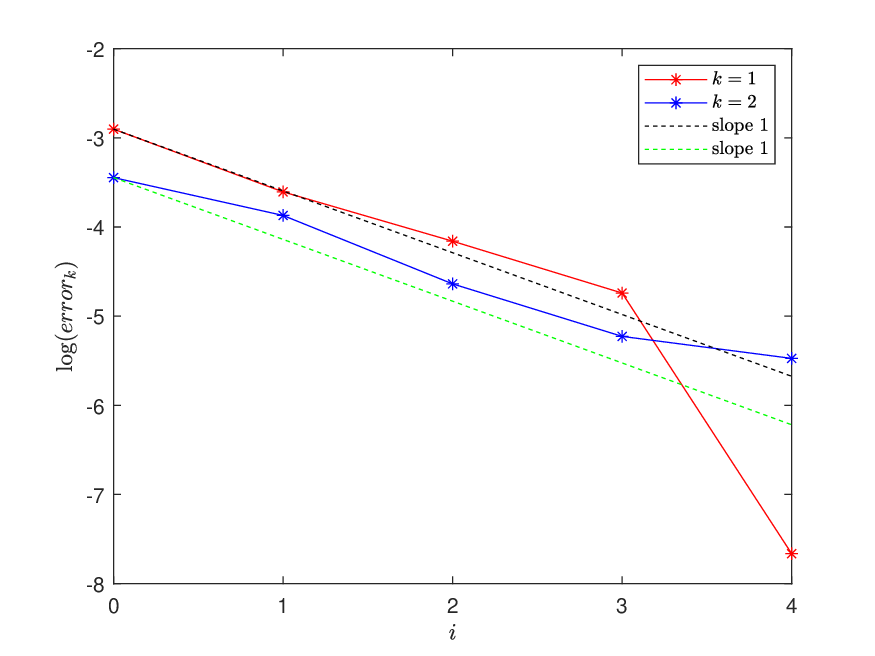}
}
\quad
\subfigure[]{
\includegraphics[scale=.45]{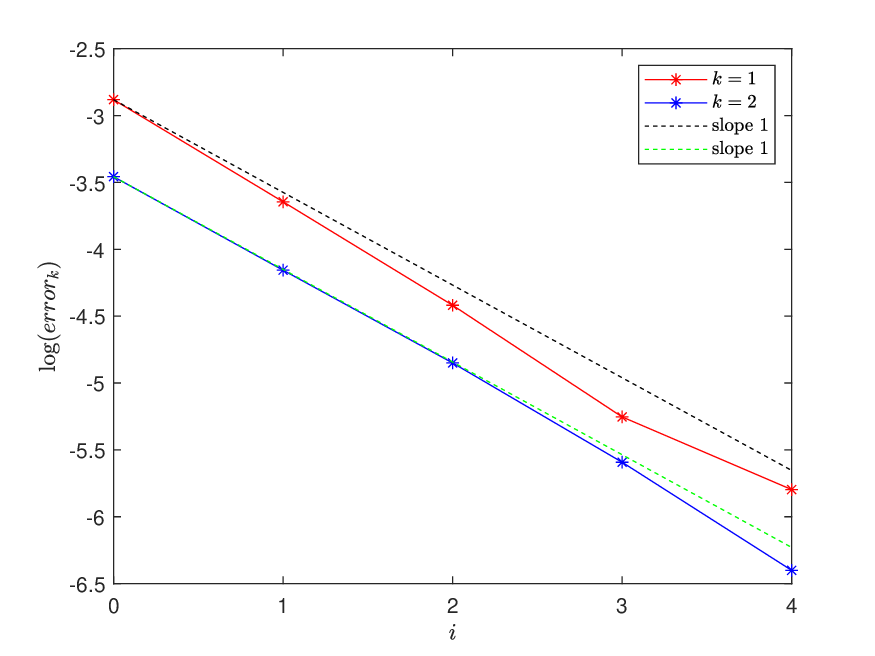}
}
\caption{Estimates $error_k=\vert \mathbb{E}(x_k^N)-\mathbb{E}(x_k(T))\vert$ of semi-implicit scheme \eqref{APSI1} with time step $\Delta t=\frac{\pi}{30}2^{-m}$ at final time $T=\pi$ for $10^4$ Brownian paths when $\sigma=\tau=1$. (a): $\varepsilon=10^{-2}$. (b): $\varepsilon=10^{-4}$.}
\label{img4}
\end{figure}

\begin{figure}[h!]
\centering
\subfigure[]{
\includegraphics[scale=.45]{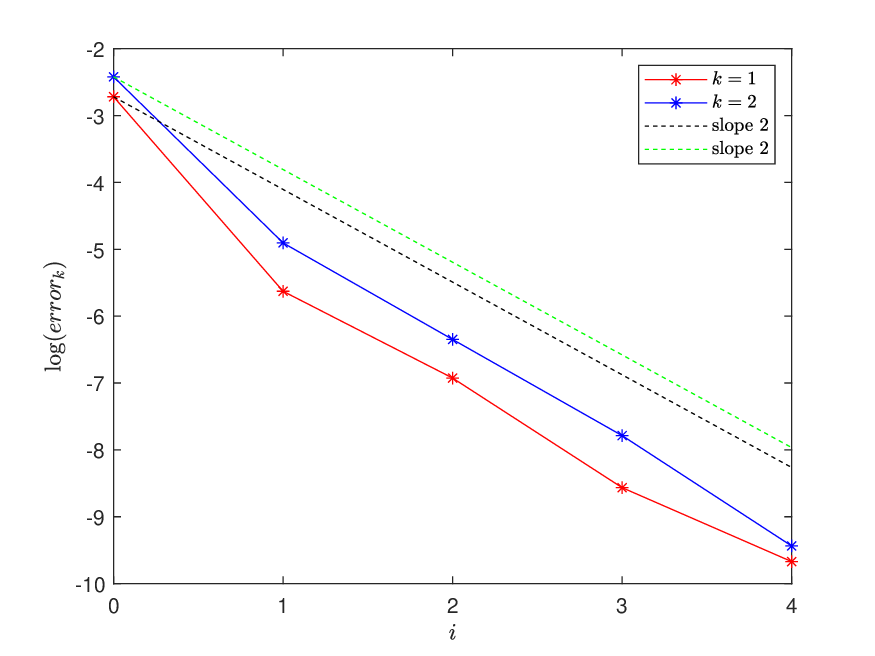}
}
\quad
\subfigure[]{
\includegraphics[scale=.45]{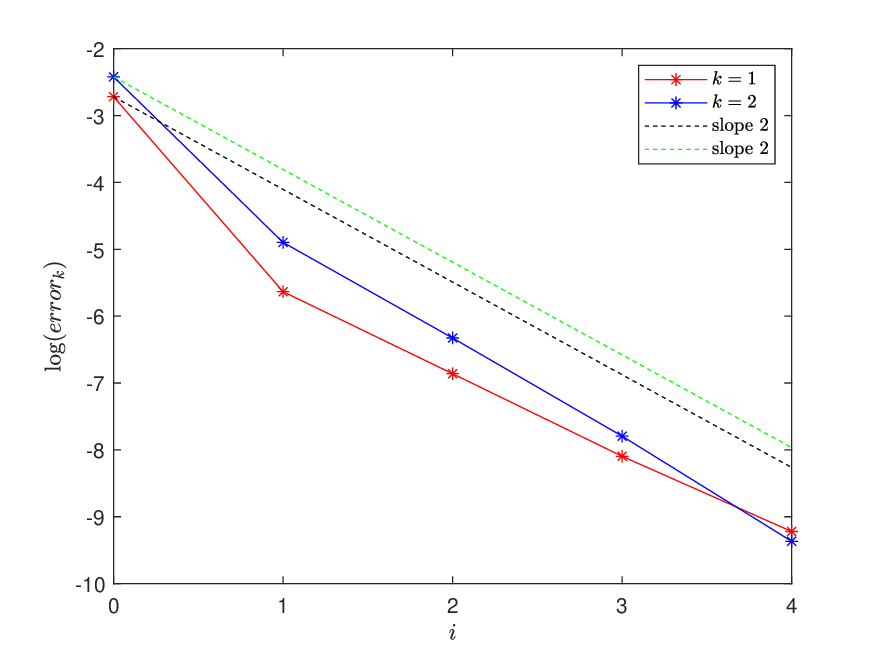}
}
\caption{Estimates $error_k=\vert \mathbb{E}(x_k^N)-\mathbb{E}(x_k(T))\vert$ of semi-implicit scheme \eqref{APSI2} with time step $\Delta t=\frac{2\pi}{15}2^{-m}$ at final time $T=\pi$ for $10^4$ Brownian paths when $\sigma=\tau=1$. (a): $\varepsilon=10^{-6}$. (b): $\varepsilon=10^{-8}$.}
\label{img5}
\end{figure}

Finally, we verify the convergence order of our methods when $\varepsilon\ll 1$ in Figure~\ref{img4} and Figure~\ref{img5}. 
Our methods \eqref{APSI1} and \eqref{APSI2} have all reached theoretical orders.

\subsection{Diocotron instability}
The diocotron instability is a plasma instability that occurs when two sheets of charges slip past each other. This stability is usually described by the guiding center model~\cite{Levy1965,Filbet2016,Ameres2018} and has been extensively simulated~\cite{Nicolas2014,Filbet2016,Filbet2018,Ameres2018,Chartier2020,GAJ2022Hamiltonian}. 
Without the collision term, there have been many numerical methods for studying this instability. Among them, the asymptotic-preserving schemes are introduced in~\cite{Filbet2018}, and the uniformly accurate methods are introduced in~\cite{Chartier2020}.
Here, we consider the model of two dimensional Vlasov--Poisson-–Fokker–-Planck system together with an external magnetic field. 
This system can be taken to model the instability which is usually encountered in magnetic fusion devices such as tokamaks. 
In this example, we also consider the effect of the collision on the instability.

We take the initial distribution function as
\begin{equation*}
f_0(x,v)=\frac{d_0(x)}{2\pi}\exp\left(-\frac{\vert v\vert^2}{2}\right),
\end{equation*}
where the initial density is
\begin{equation*}
d_0(x)=
\begin{cases}
(1+{\alpha}\cos(l\theta))\exp(-4(\vert x\vert-5)^2)&\mbox{if $r^-\leq\vert x\vert\leq{r}^+$,} \\
0&\mbox{otherwise,}
\end{cases}
\end{equation*}
with $\theta={\rm{atan}}(y/x)$ and $l$ the number of vortices. 
In our simulation, we take $r^-=3.5,r^+=6.5,\alpha=0.2$ and more than $6\times10^7$ particles for PIC simulation.
For the computational domain, we choose $\Omega_x=[-8,8]\times[-8,8]$ and $\Omega_v=[-4,4]\times[-4,4]$.
Furthermore, we use \eqref{APSI1} for time discretization. 

\begin{figure}[!ht]
\centering
\subfigure{
\includegraphics[scale=.4]{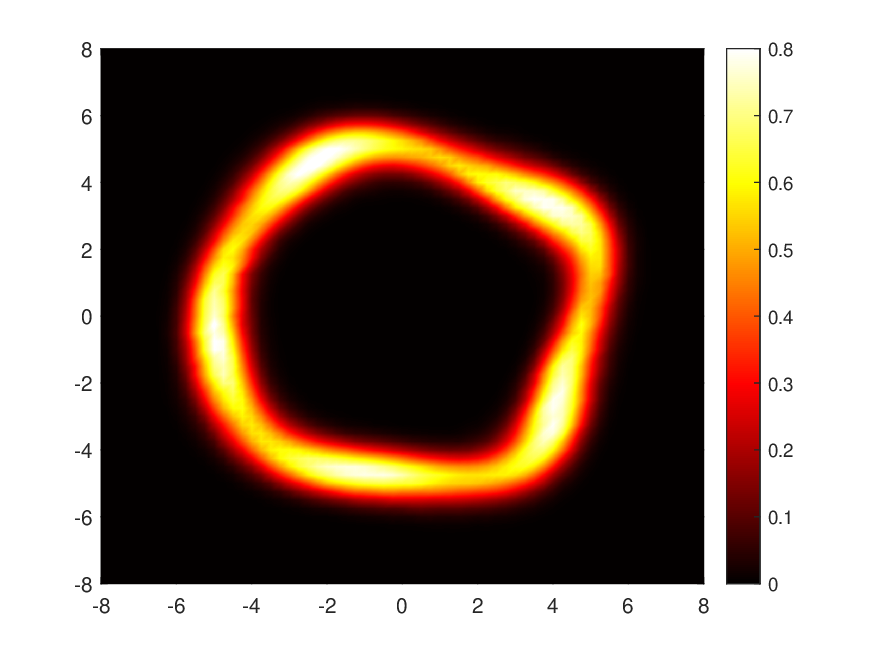}
}
\quad
\subfigure{
\includegraphics[scale=.4]{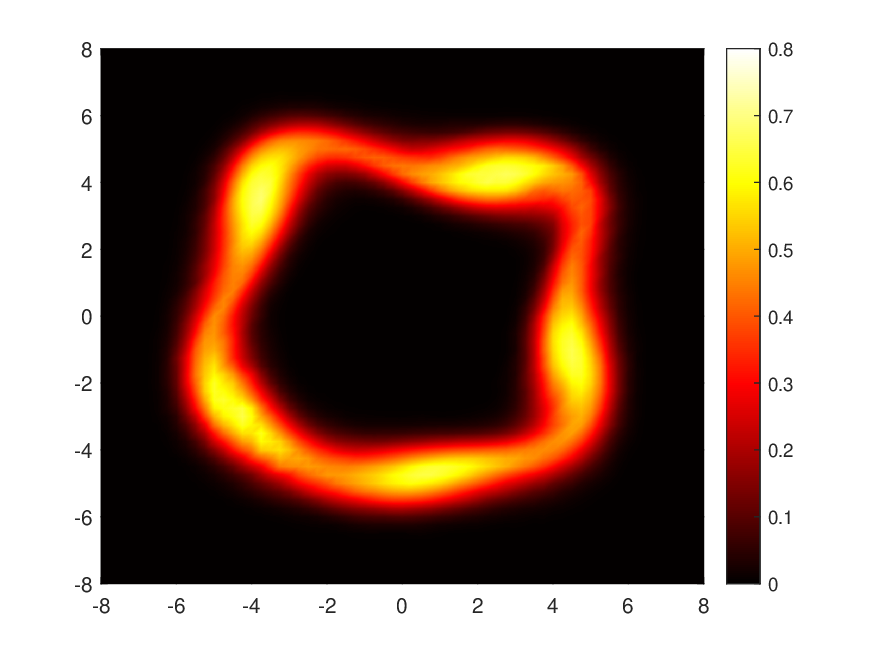}
}
\quad
\subfigure{
\includegraphics[scale=.4]{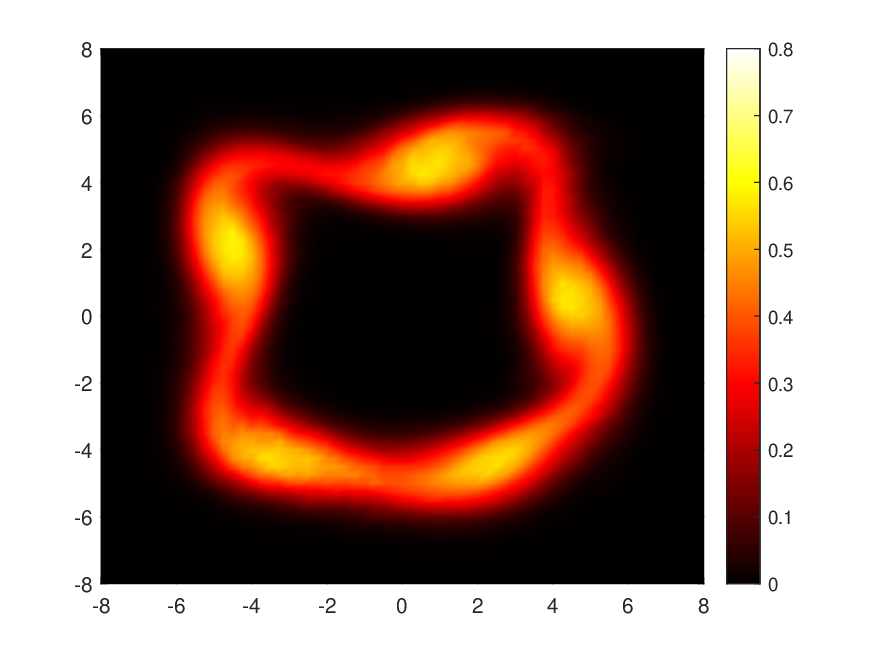}
}
\quad
\subfigure{
\includegraphics[scale=.4]{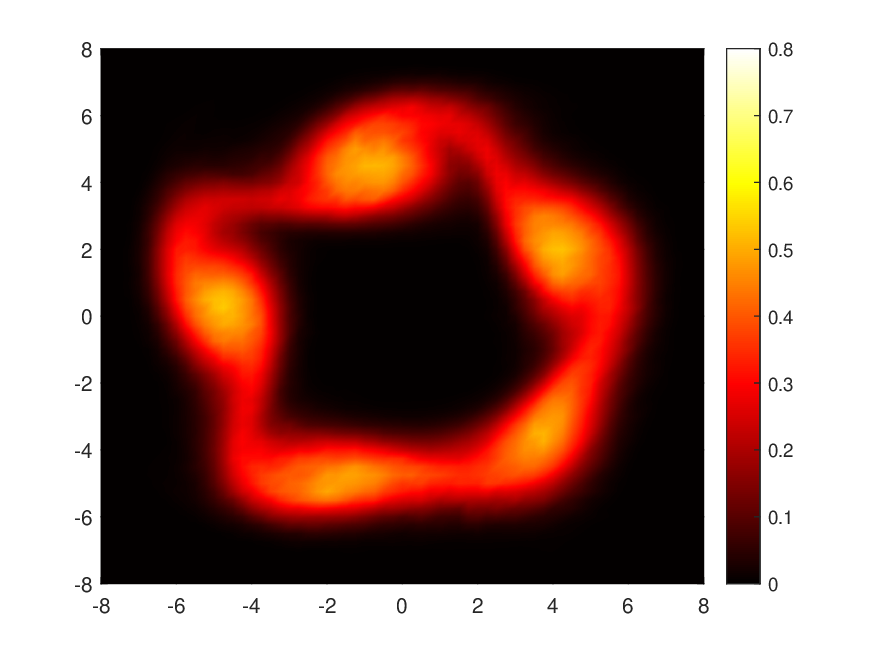}
}
\caption{Time evolution of the density $\rho$ for time $t=5,10,15,20$ and $l=5$ with parameter $\sigma=\tau=1,\varepsilon=10^{-2}$.}
\label{imgdi1}
\end{figure}

\begin{figure}[!ht]
\centering
\subfigure{
\includegraphics[scale=.4]{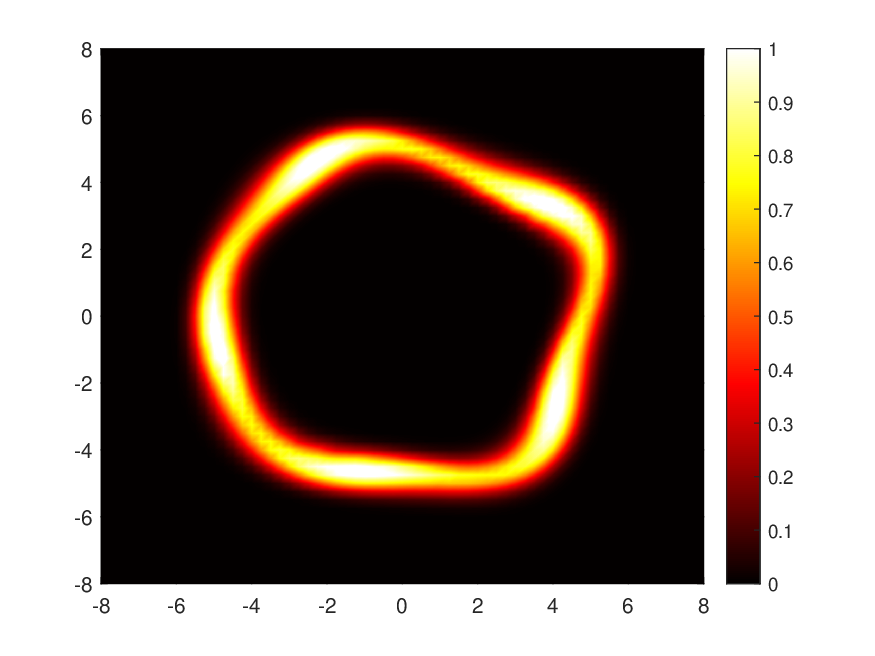}
}
\quad
\subfigure{
\includegraphics[scale=.4]{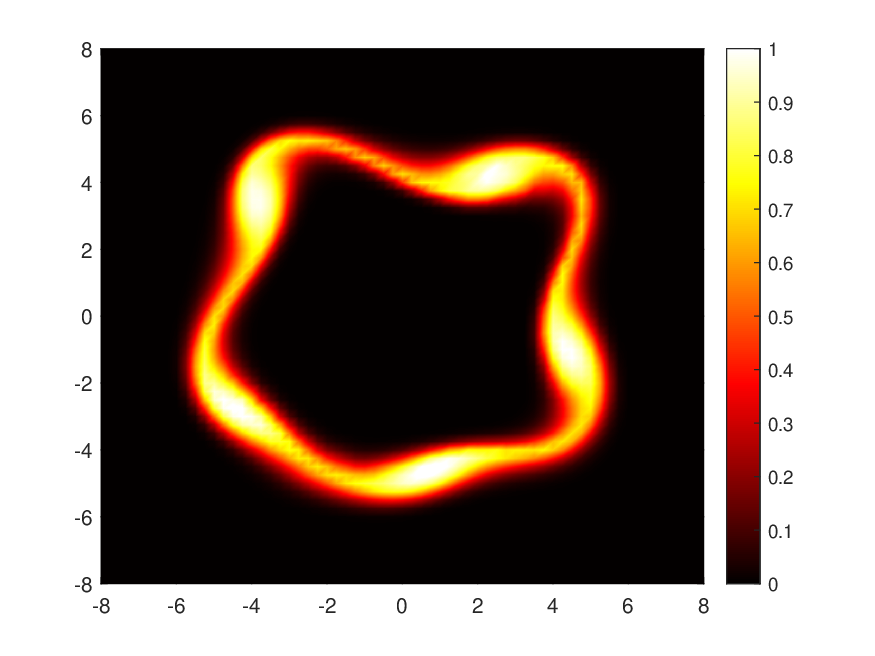}
}
\quad
\subfigure{
\includegraphics[scale=.4]{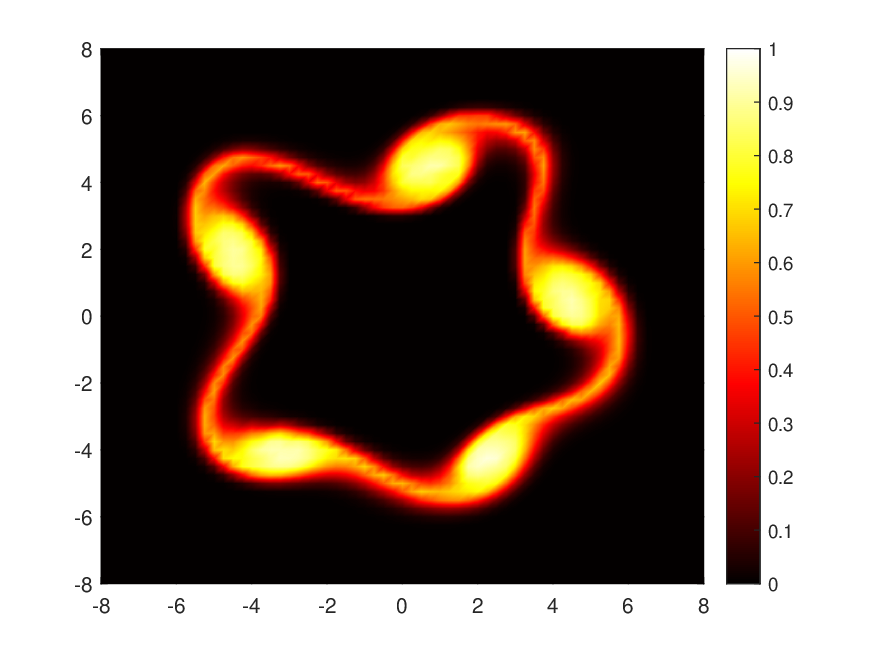}
}
\quad
\subfigure{
\includegraphics[scale=.4]{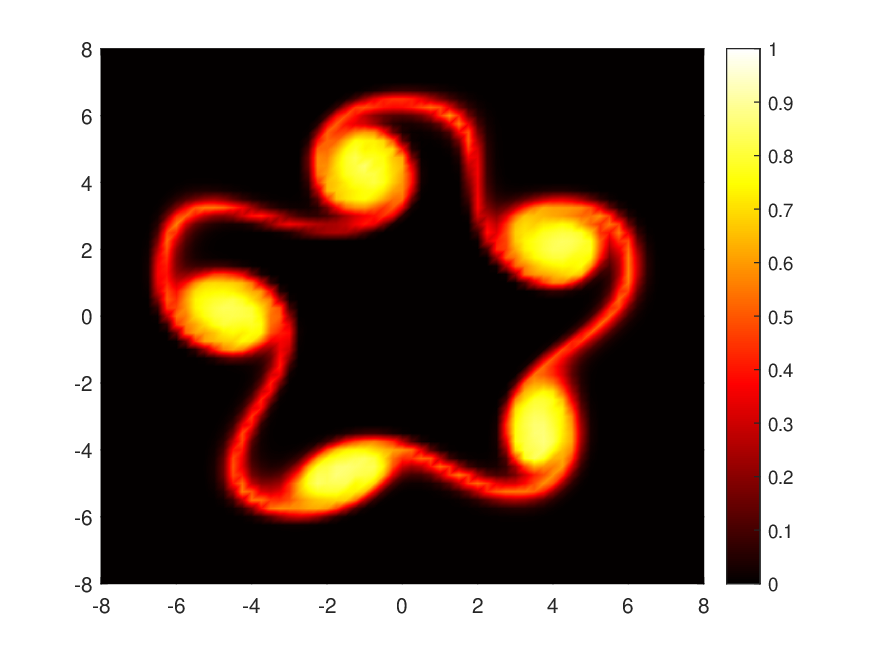}
}
\caption{Time evolution of the density $\rho$ for time $t=5,10,15,20$ and $l=5$ with parameter $\sigma=\tau=1,\varepsilon=10^{-4}$.}
\label{imgdi2}
\end{figure}

 \begin{figure}[!ht]
 \centering
 \subfigure{
 \includegraphics[scale=.4]{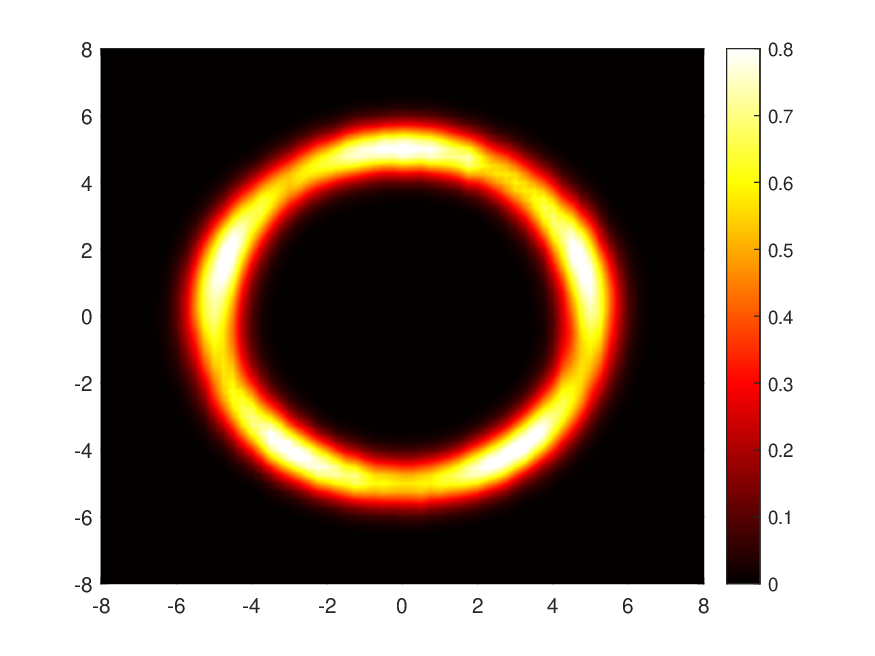}
 }
 \quad
 \subfigure{
 \includegraphics[scale=.4]{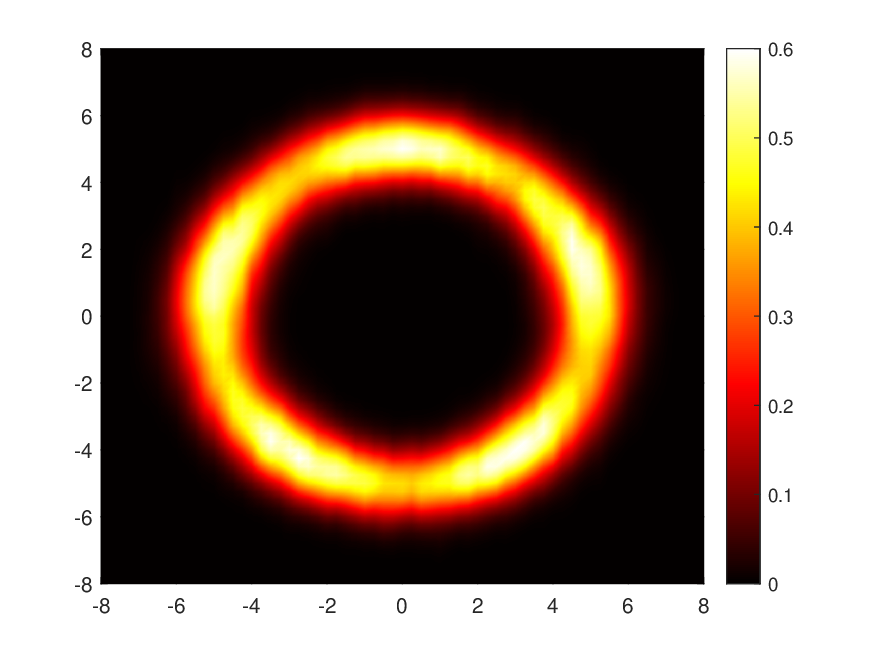}
 }
 \quad
 \subfigure{
 \includegraphics[scale=.4]{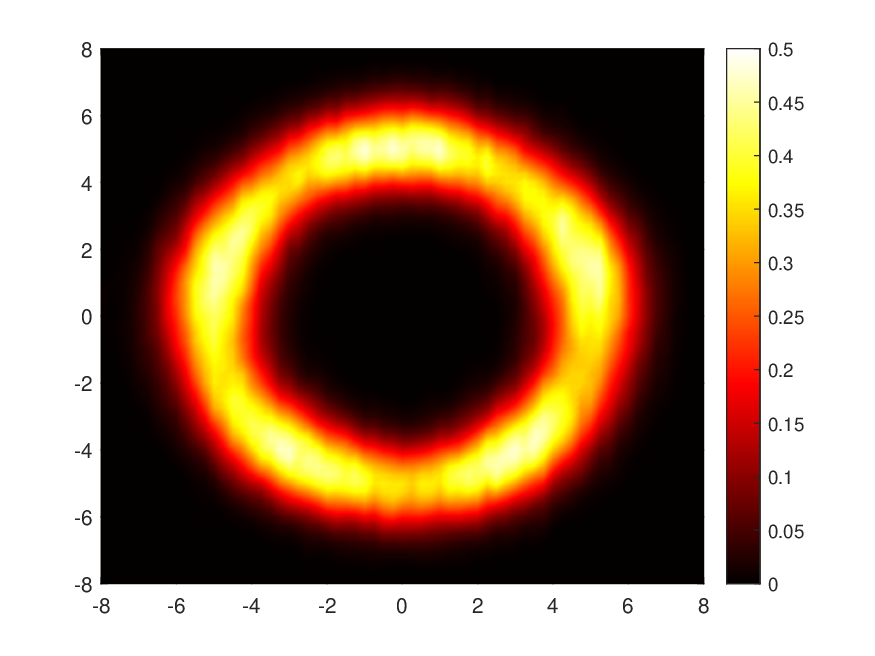}
 }
 \quad
 \subfigure{
 \includegraphics[scale=.4]{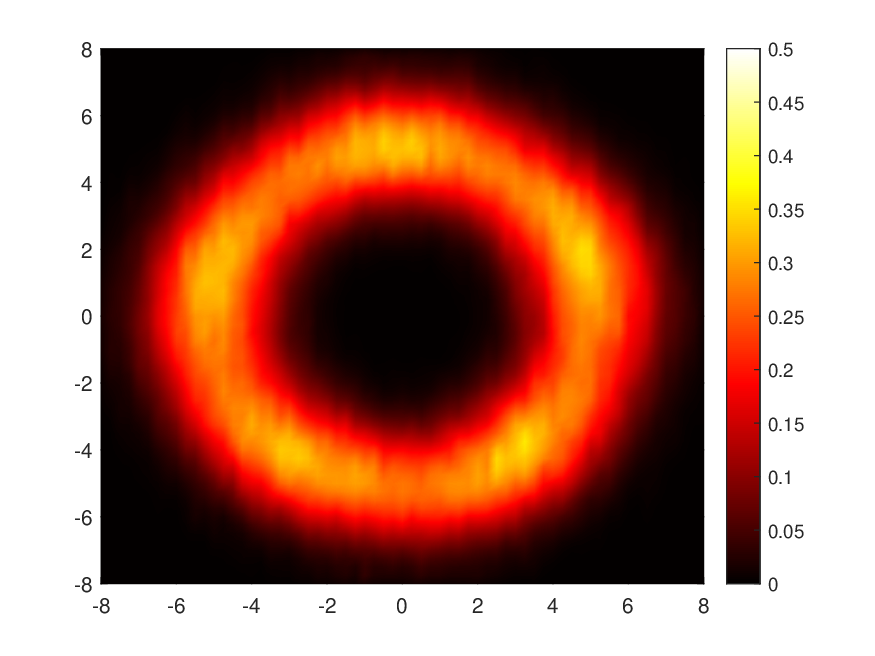}
 }
 \caption{Time evolution of the density $\rho$ for time $t=0.1,0.3,0.5,1$ and $l=5$ with parameter $\sigma=1,\tau=\varepsilon=10^{-2}$.}
 \label{imgdi3}
 \end{figure}

 \begin{figure}[!ht]
 \centering
 \subfigure{
 \includegraphics[scale=.4]{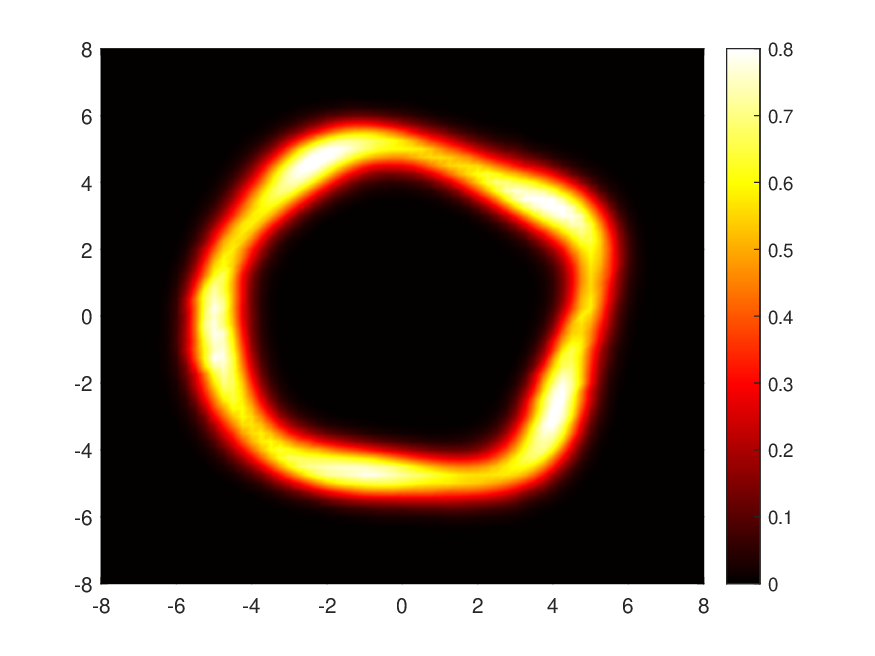}
 }
 \quad
 \subfigure{
 \includegraphics[scale=.4]{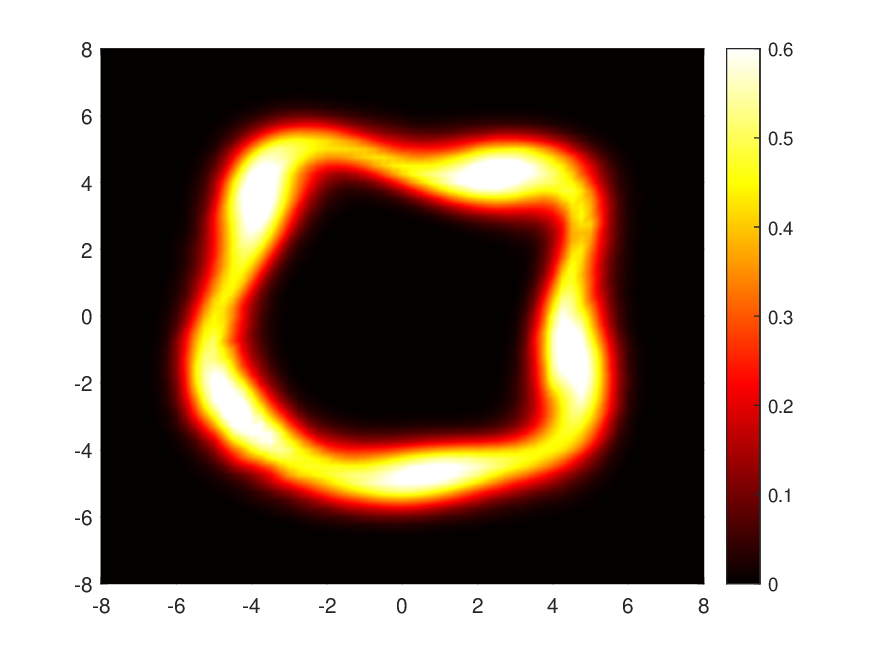}
 }
 \quad
 \subfigure{
 \includegraphics[scale=.4]{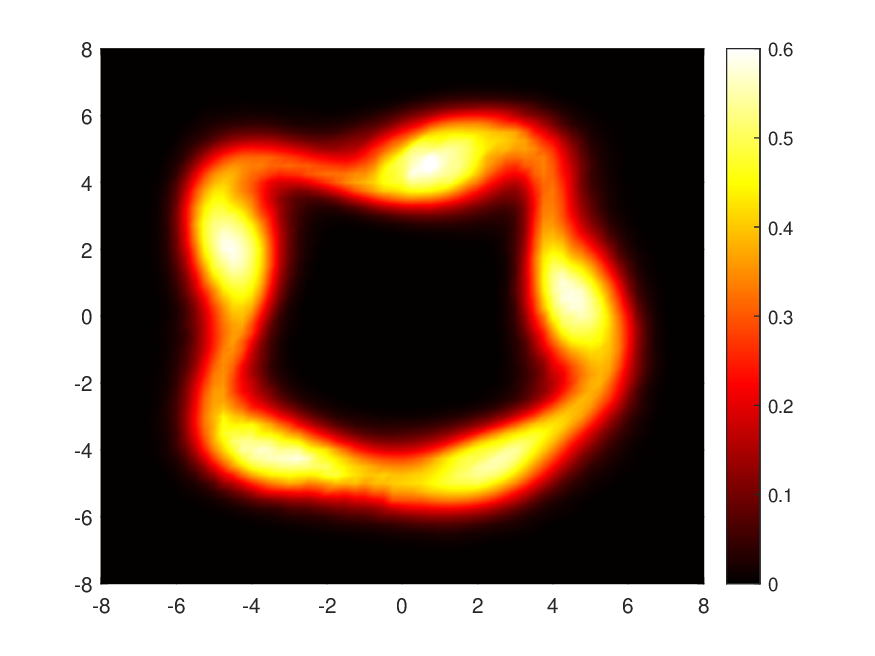}
 }
 \quad
 \subfigure{
 \includegraphics[scale=.4]{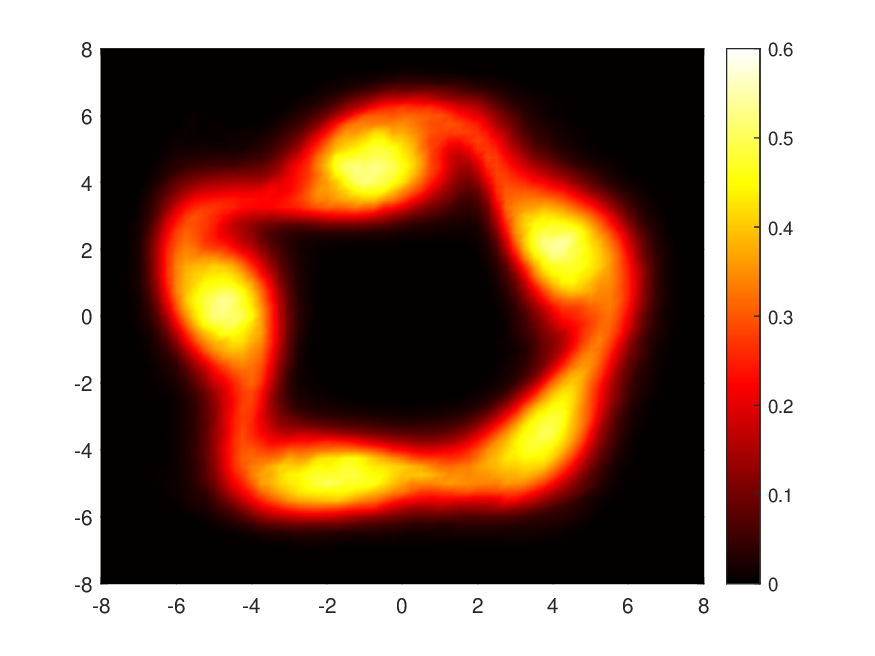}
 }
 \caption{Time evolution of the density $\rho$ for time $t=5,10,15,20$ and $l=5$ with parameter $\sigma=1,\tau=10^{-2},\varepsilon=10^{-4}$.}
 \label{imgdi4}
 \end{figure}

In Figure~\ref{imgdi1} and Figure~\ref{imgdi2}, we set a small oscillation with $\sigma=\tau=1$. 
It can be observed that collisions cause plasma to diffuse outward, while strong magnetic fields can limit this process.
By further enhancing collision in Figure~\ref{imgdi3} and Figure~\ref{imgdi4} with $\sigma=1,\tau=10^{-2}$, we can observe that the magnetic field with $\varepsilon=10^{-2}$ can no longer effectively suppress collision behavior.
Then it needs a stronger magnetic field to restrict the plasma.

\section{Conclusion}

In this study, we have developed novel PIC methods for solving the MVPFP system. Our paper begins with a thorough analysis of the asymptotic properties of the MVPFP equation, followed by the demonstration that the particle method effectively preserves these asymptotic properties across micro-to-macro scale transitions. 
The numerical framework combines finite element discretization in space with semi-implicit time integration, ensuring asymptotic-preserving characteristics that have been rigorously verified through both theoretical analysis and numerical experiments. 
This validation guarantees the long-term stability and accuracy of our proposed algorithms.
To illustrate the application of our method, we present comprehensive results from a $2+2$-dimensional simulation, particularly examining the influence of collision effects on long-term system behavior. 
Our numerical experiments demonstrate that the proposed methods accurately capture essential physical phenomena, including the magnetic confinement effects on plasma dynamics.

While the current work focuses on the maximal ordering scaling scenario, several important extensions remain for future investigation. 
These include the development of methods for more complex electromagnetic field configurations and the exploration of additional physical regimes. 
These challenging directions will form the basis of our ongoing research efforts.



\bibliography{references}
\bibliographystyle{plain}
\end{document}